\address{\noindent
Pierre Youssef,\newline
Universit\'{e} Paris-Est Marne-La-Vall\'ee \newline
Laboratoire d'Analyse et Math\'{e}matiques Appliqu\'ees,\newline
5, boulevard Descartes,
Champs sur Marne, \newline
77454 Marne-la-Vall\'{e}e, Cedex 2, France \newline
\texttt{e-mail: \small pierre.youssef@univ-mlv.fr}
}
\date{}
\newtheorem{theo}{Theorem}[section]
\newtheorem{coro}[theo]{Corollary}
\newtheorem{lem}[theo]{Lemma}
\newtheorem{Rq}[theo]{Remark}
\newtheorem*{theoB}{Theorem B}
\newtheorem*{theoA}{Theorem A}
\newtheorem*{theoC}{Theorem C}
\newtheorem*{theoD}{Theorem D}
\newtheorem*{theoE}{Theorem E}
\title[]{Restricted invertibility and the Banach-Mazur \\ distance to the cube}
\author{Pierre Youssef}
\begin{document}

\maketitle

\begin{abstract}
We prove a normalized version of the restricted invertibility
principle obtained by Spielman-Srivastava in \cite{MR2956233}.
Applying this result, we get a new proof of the proportional
Dvoretzky-Rogers factorization theorem recovering the best current
estimate in the symmetric setting while we improve the best known result 
in the nonsymmetric case. As a consequence, we slightly improve the estimate
for the Banach-Mazur distance to the cube: the distance of every
$n$-dimensional normed space from $\ell_{\infty }^n$ is at most
$(2n)^{\frac{5}{6}}$. Finally, using tools from the work of
Batson-Spielman-Srivastava in \cite{batson-spielman-srivastava}, we
give a new proof for a theorem of Kashin-Tzafriri \cite{kashin-tzafriri} 
on the norm of restricted matrices.
\end{abstract}

\section{Introduction}

Given an $n\times m$ matrix $U$, viewed as an operator from
$\ell_2^m$ to $\ell_2^n$, the restricted invertibility problem asks
if we can extract a large number of linearly independent columns of
$U$ and provide an estimate for the norm of the restricted inverse.
If we write $U_{\sigma }$ for the restriction of $U$ to the columns
$Ue_i$, $i\in \sigma\subset \{ 1,\ldots ,m\}$, we want to find a
subset $\sigma $, of cardinality $k$ as large as possible, such that
$\Vert U_{\sigma } x\Vert_2 \geqslant c\Vert x\Vert_2$ for all $x\in
\mathbb{R}^{\sigma }$ and to estimate the constant $c$ (which will
depend on the operator $U$). This question was studied by
Bourgain-Tzafriri \cite{b-tz} who obtained a result for square
matrices: \vskip 0.3cm Given an $n\times n$ matrix $T$ (viewed as an
operator on $\ell_2^n$) whose columns are of norm one, there exists
$\sigma \subset \{1,\ldots ,n\}$ with $\vert \sigma\vert \geqslant d
\frac{n}{\Vert T \Vert^2}$ such that $ \Vert T_\sigma x\Vert_2
\geqslant c\Vert x\Vert_2$ for all $x\in \mathbb{R}^\sigma$, where
$d, c>0$ are absolute constants. \vskip 0.3cm

Here and in the rest of the paper, $\Vert \cdot\Vert_2$ denotes the
Euclidean norm. 
For any matrix
$A$,  $\Vert A\Vert$ denotes its operator norm seen as an operator 
on $l_2$ and 
$\Vert A\Vert_{{\rm HS}}$ denotes the Hilbert-Schmidt norm, i.e.
$$\Vert A\Vert_{{\rm HS}}= \sqrt{Tr(A\cdot A^*)} = \left(\sum_i \Vert
C_i\Vert_2^2\right)^{1/2}$$ where $C_i$ are the columns of $A$. 
Given $\sigma\subset \{1,...,m\}$, we denote $U_{\sigma}$ the restriction 
of $U$ to the columns with indices in $\sigma$ i.e $U_{\sigma}=UP_{\sigma}^t$ 
where $P_{\sigma}: \mathbb{R}^m\longrightarrow \mathbb{R}^{\sigma}$ is the canonical 
coordinate projection. 


In \cite{MR1826503}, Vershynin generalized this result for 
rectangular matrices and improved the estimate for the
size of the subset. Using a technical iteration scheme based on the
previous result of Bourgain-Tzafriri, combined with a theorem of
Kashin-Tzafriri which we will discuss in the last section, he
obtained the following : \vskip 0.3cm 

\begin{theoA}\label{vershynin}
Let $U$ be an $n\times m$ matrix and denote $\widetilde{U}$ the 
matrix $U$ with normalized columns. For any $\varepsilon \in (0,1)$, there exists 
$\sigma\subset\{1,...,m\}$ with  $$\vert \sigma\vert \geqslant \left[(1-\varepsilon)\frac{ \Vert
U\Vert_{{\rm HS}}^2}{\Vert U\Vert^2}\right] $$ such that 
$$
c_1(\varepsilon)\leqslant s_{\min}(\widetilde{U}_{\sigma})\leqslant 
s_{\max}(\widetilde{U}_{\sigma})\leqslant c_2(\varepsilon)
$$
\end{theoA}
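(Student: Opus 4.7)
The plan is to combine two ingredients: a normalized version of the Spielman--Srivastava restricted invertibility principle (the main technical tool developed in this paper), which produces a subset with a controlled lower bound on the smallest singular value of the restricted normalized matrix, together with the Kashin--Tzafriri theorem, which extracts a further column subset on which the operator norm is controlled from above. This replaces Vershynin's original iterative use of Bourgain--Tzafriri by a single application of the stronger Spielman--Srivastava-type selection.

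First I would apply the normalized restricted invertibility principle to $U$ with a parameter $\varepsilon/2$. This produces a subset $\sigma_1 \subset \{1,\ldots,m\}$ with
$$|\sigma_1| \geq \left\lfloor (1-\varepsilon/2)\,\frac{\Vert U\Vert_{{\rm HS}}^2}{\Vert U\Vert^2}\right\rfloor$$
and $s_{\min}(\widetilde{U}_{\sigma_1}) \geq c_1(\varepsilon)$. The fact that the selection is carried out on the normalized matrix $\widetilde{U}$ (rather than on $U$ itself, where columns of very different norms could skew the selection) is exactly what makes the bound robust under further restriction.

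Next I would apply the Kashin--Tzafriri theorem to $\widetilde{U}_{\sigma_1}$, which has unit-norm columns by construction. Choosing its proportionality parameter appropriately in terms of $\varepsilon$, this yields $\sigma \subset \sigma_1$ with $|\sigma| \geq \lfloor(1-\varepsilon)\Vert U\Vert_{{\rm HS}}^2/\Vert U\Vert^2\rfloor$ such that $s_{\max}(\widetilde{U}_\sigma) \leq c_2(\varepsilon)$. The lower bound on $s_{\min}$ is automatically inherited by the subset: for $x \in \mathbb{R}^\sigma$, extending by zero to $\tilde{x}\in\mathbb{R}^{\sigma_1}$ gives $\Vert \widetilde{U}_\sigma x\Vert_2 = \Vert \widetilde{U}_{\sigma_1}\tilde{x}\Vert_2 \geq c_1(\varepsilon)\Vert x\Vert_2$, so both bounds hold simultaneously on $\sigma$.

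The main obstacle is establishing the normalized restricted invertibility principle itself. The Spielman--Srivastava argument relies on a barrier potential tracking the smallest eigenvalue of a partial Gram matrix $\sum_{i \in \sigma} (Ue_i)(Ue_i)^*$, updated one column at a time. In the normalized version one must track instead $\sum_{i \in \sigma}\widetilde{U}e_i\,(\widetilde{U}e_i)^*$, and show that at each step a suitable column can be added without violating the barrier constraints. The verification requires the correct weighted average over columns (weights being the squared norms $\Vert Ue_i\Vert_2^2$), and it is precisely this weighting that produces the ratio $\Vert U\Vert_{{\rm HS}}^2/\Vert U\Vert^2$ as the achievable subset size. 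Checking that the averaged barrier inequality still closes under the new weights is the technical heart of the argument.
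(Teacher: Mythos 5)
Your first step (applying Theorem~\ref{theoreme-principal} with $D=\mathrm{diag}(\Vert Ue_j\Vert_2)$ to get $s_{\min}(\widetilde{U}_{\sigma_1})\geq c_1(\varepsilon)$) is sound and is exactly how the paper recovers the lower-bound part of Theorem~A. Note, however, that the paper never proves Theorem~A in full; it is quoted from Vershynin \cite{MR1826503}, and the paper explicitly restricts attention to the $s_{\min}$ estimate, so there is no in-paper argument to compare your $s_{\max}$ step against.

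Your second step has a genuine gap. You want to apply Kashin--Tzafriri to $\widetilde{U}_{\sigma_1}$ and retain a proportion $\lambda=|\sigma|/|\sigma_1|$ of its columns. Since the algorithm of Theorem~\ref{theoreme-principal} only ever runs for at most $(1-\varepsilon')\Vert U\Vert_{\mathrm{HS}}^2/\Vert U\Vert^2$ steps, you always have $|\sigma_1|\leq \Vert U\Vert_{\mathrm{HS}}^2/\Vert U\Vert^2$, so to end with $|\sigma|\geq (1-\varepsilon)\Vert U\Vert_{\mathrm{HS}}^2/\Vert U\Vert^2$ you are forced to take $\lambda\geq 1-\varepsilon$, i.e.\ $\lambda$ close to~$1$. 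The Kashin--Tzafriri bound (Theorem~E, or the sharpened Theorem~\ref{kashin-tzafriri-new}) applied to $\widetilde{U}_{\sigma_1}$ then reads roughly $\Vert\widetilde{U}_\sigma\Vert\lesssim \sqrt{\lambda}\,\Vert\widetilde{U}_{\sigma_1}\Vert+1$, which is useless unless $\Vert\widetilde{U}_{\sigma_1}\Vert$ is already bounded. But the one-sided barrier argument of Theorem~\ref{theoreme-principal} gives no upper control on $\Vert\widetilde{U}_{\sigma_1}\Vert$: the only a priori information is $\mathrm{Tr}\bigl(\widetilde{U}_{\sigma_1}\widetilde{U}_{\sigma_1}^t\bigr)=|\sigma_1|$ and a lower bound $c_1^2$ on the small eigenvalues, which still allows $\Vert\widetilde{U}_{\sigma_1}\Vert^2$ to be as large as $|\sigma_1|(1-c_1^2)+c_1^2$. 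Thus your $c_2(\varepsilon)$ is never established.

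The fix is not a matter of tuning parameters: the order of the two tools is the point. Vershynin's scheme (as the paper describes) first uses Kashin--Tzafriri to pass to a column subset whose \emph{operator norm} is controlled, and only then invokes a restricted-invertibility step on that subset, so that the $s_{\max}$ bound is inherited forward rather than hoped for afterward. Alternatively, one would need a two-sided barrier argument (in the spirit of Batson--Spielman--Srivastava) that tracks an upper barrier alongside the lower one during the selection; the single lower-barrier potential used in Section~2 cannot deliver the upper singular value estimate on its own.
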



One can easily check that, when $U$ is a square matrix, this is a generalization of the Bourgain-Tzafriri
theorem, which was previously only proved for a fixed value of
$\varepsilon$. The constants $c_1(\varepsilon)$ and $c_2(\varepsilon)$ play a crucial role in
applications and finding the right dependence is an important problem. 
Let us mention that in this paper, we will be interested only in the estimate of the smallest singular value which is 
the part related to the restricted invertibility principle.

Back to the original restricted invertibility problem, a recent work
of Spielman-Srivastava \cite{MR2956233} 
provides the best known estimate for the norm
of the inverse matrix. Their proof uses a new deterministic method
based on linear algebra, while the previous works on the subject
employed probabilistic, combinatorial and functional-analytic
arguments.

More precisely, Spielman-Srivastava proved the following:

\begin{theoB}[Spielman-Srivastava]\label{srivastava}
Let $U$ be an $n\times m$ matrix. For any $\varepsilon \in (0,1)$, there exists 
$\sigma\subset\{1,...,m\}$ with  $$\vert \sigma\vert \geqslant \left[(1-\varepsilon)^2\frac{ \Vert
U\Vert_{{\rm HS}}^2}{\Vert U\Vert^2}\right] $$ such that 
$$
s_{\min}(U_{\sigma})\geqslant \varepsilon \frac{\Vert U\Vert_{\rm}}{\sqrt{m}}
$$
\end{theoB}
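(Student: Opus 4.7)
The plan is to use the deterministic barrier-function technique from Batson-Spielman-Srivastava: build $\sigma$ by greedily adding one column at a time while maintaining a quantitative lower spectral barrier on the partial sum. Let $v_1,\dots,v_m \in \mathbb{R}^n$ denote the columns of $U$, and for a growing sequence of indices $i_1,\dots,i_j$ set $A_j := \sum_{s=1}^{j} v_{i_s} v_{i_s}^T$. Since $s_{\min}(U_\sigma)^2$ coincides with the smallest nonzero eigenvalue of $A_k$, it suffices to produce a positive level $l_k$ with $A_k \succ l_k I$ on the range of $A_k$, where $l_k$ is at least $\varepsilon^2 \|U\|_{\mathrm{HS}}^2/m$.

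To this end, introduce the lower-barrier potential $\Phi^l(A) = \mathrm{Tr}\bigl((A - l I)^{-1}\bigr)$, taken on the subspace where $A - lI$ is invertible. The heart of the argument is a one-step lemma: given $A \succ lI$ and a shift $\delta > 0$ small enough (depending on $\Phi^l(A)$, $\|U\|$, and $\|U\|_{\mathrm{HS}}$), there exists an index $i \in \{1,\dots,m\}$ such that both $A + v_i v_i^T \succ (l+\delta) I$ and $\Phi^{l+\delta}(A + v_i v_i^T) \leq \Phi^l(A)$. I would derive this from the Sherman-Morrison identity
$$\Phi^{l+\delta}(A + v v^T) = \Phi^{l+\delta}(A) - \frac{v^T (A - (l+\delta) I)^{-2} v}{1 + v^T (A - (l+\delta) I)^{-1} v}$$
by averaging the right-hand side uniformly over $i = 1, \dots, m$, using the key trace inequality $\sum_{i=1}^m v_i^T M v_i = \mathrm{Tr}(U U^T M) \leq \|U\|^2 \,\mathrm{Tr}(M)$, valid for every positive semidefinite $M$. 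Applying this with $M = (A - (l+\delta)I)^{-1}$ and $M = (A - (l+\delta)I)^{-2}$ turns the averaged rank-one update into an expression comparable to $\Phi^l(A)$, via the scalar identity $\frac{1}{\lambda - (l+\delta)} = \frac{1}{\lambda - l} + \frac{\delta}{(\lambda - l)(\lambda - (l+\delta))}$; the resulting averaged inequality is non-positive precisely when $\delta$ satisfies a quantitative bound.

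Iterating the lemma $k$ times starting from $A_0 = 0$ and a negative initial barrier $l_0$ chosen so that $\Phi^{l_0}(A_0) = n/(-l_0)$ is small yields a subset $\sigma$ of size $k$ with $A_k \succ l_k I$ on the range of $A_k$, where $l_k = l_0 + k \delta$. The main obstacle is the calibration of $l_0$, $\delta$, and $k$: pushing $l_0$ very negative keeps the initial potential small (easing the iteration) but forces $k\delta$ to be correspondingly large to drive the final barrier back up to the desired positive target, which stresses the averaged one-step inequality at later iterations. Balancing these two requirements against the trace inequality above is what yields both the factor $(1-\varepsilon)^2$ in the cardinality of $\sigma$ and the factor $\varepsilon$ in the lower bound on $s_{\min}(U_\sigma)$.
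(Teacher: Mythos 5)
Your plan adopts the Batson--Spielman--Srivastava lower-barrier scheme with the unweighted potential $\Phi^{l}(A)=\mathrm{Tr}\bigl((A-lI)^{-1}\bigr)$, a negative starting barrier $l_0$, and an increasing shift $l\mapsto l+\delta$. This cannot be carried through here, for a structural reason: throughout the process $A_j=\sum_{s\le j} v_{i_s}v_{i_s}^t$ has rank $j$, with a kernel of dimension $n-j>0$ at every step you care about (since the final $k$ is a proper fraction of $n$). As soon as $l+\delta$ becomes positive, the matrix $(A-(l+\delta)I)^{-1}$ has the $n-j$ negative eigenvalues $-1/(l+\delta)$ on the kernel of $A$, so three things break at once. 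First, the conclusion of your one-step lemma, $A+v_iv_i^t\succ(l+\delta)I$, literally forces $A+v_iv_i^t$ to be positive definite, i.e.\ full rank, which a rank-$(j+1)$ matrix cannot be. Second, the trace inequality $\mathrm{Tr}(UU^tM)\le\|U\|^2\mathrm{Tr}(M)$ that you invoke with $M=(A-(l+\delta)I)^{-1}$ requires $M\succeq0$ and therefore fails. Third, the kernel block contributes $-(n-j)/(l+\delta)$ to $\Phi^{l+\delta}(A)$, which diverges to $-\infty$ as $l+\delta\to0^+$, so the potential bound no longer controls the positive eigenvalues of $A$ on its range. In other words, the BSS lower barrier is designed to push a \emph{full-rank} approximant up; it cannot certify a positive lower bound for the nonzero spectrum of a rank-deficient sum of rank-one matrices.

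What the paper (following Spielman--Srivastava) does instead is qualitatively different and is exactly designed to sidestep this. The potential is the weighted one, $\phi(A,b)=\mathrm{Tr}\bigl(U^t(A-bI)^{-1}U\bigr)$, and the barrier $b$ \emph{starts positive and decreases}: $b_0>0$ and $b_{l+1}=b_l-\delta$. The invariant is that $A_l$ has exactly $l$ nonzero eigenvalues all above $b_l$, while the kernel eigenvalues are all $0<b_l$. The barrier thus always separates kernel from range, so $(A_l-b_lI)^{-1}$ never needs to be PSD; the kernel contributes the \emph{negative} amount $-\|ZU\|_{\mathrm{HS}}^2/b_l$ to $\phi$, and the whole potential is kept $\le -\|D\|_{\mathrm{HS}}^2-\|U\|^2/\delta<0$. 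Crucially, the weighting by $U$ makes the average over columns exact, $\sum_j (Ue_j)^t(A-bI)^{-1}(Ue_j)=\phi(A,b)$, so no trace inequality is needed for that term; the bound $UU^t\preceq\|U\|^2I$ is applied only to produce $\mathrm{Tr}\bigl(U^t(A-bI)^{-2}U\bigr)$, where $(A-bI)^{-2}\succeq0$ always. After $k=(1-\varepsilon)^2\|U\|_{\mathrm{HS}}^2/\|U\|^2$ steps the barrier has dropped to $b_k=\varepsilon^2\|U\|_{\mathrm{HS}}^2/m>0$, which is precisely the promised lower bound on $s_{\min}(U_\sigma)^2$. To repair your argument you would need to replace $\mathrm{Tr}\bigl((A-lI)^{-1}\bigr)$ by $\mathrm{Tr}\bigl(U^t(A-bI)^{-1}U\bigr)$ and flip the direction of the barrier from increasing-from-negative to decreasing-from-positive; the resulting proof is the paper's.
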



In the applications, one might need to extract multiples of the
columns of the matrix. Adapting the proof of Spielman-Srivastava,
we will generalize the restricted invertibility theorem for any rectangular matrix and,
under some conditions, for any choice of multiples.

If $D$ is an $m\times m$ diagonal matrix with diagonal entries
$(\alpha_j)_{j\leqslant m}$, we set $\Gamma_D:=\{j\leqslant m\mid \
\alpha_j \neq 0\}$ and for $\sigma\subset \{1,...,m\}$ we 
write $D_{\sigma}^{-1}$ for the restricted
inverse of $D$ i.e the diagonal matrix whose diagonal entries are
the inverses of the respective entries of $D$ for indices in
$\sigma$ and zero elsewhere. The main result of this paper is the
following:

\begin{theo}\label{theoreme-principal}
Given an $n\times m$ matrix $U$ and a diagonal $m\times m$ matrix
$D$ with $(\alpha_j)_{j\leqslant m}$ on its diagonal, with the
property that $ {\rm Ker}(D)\subset {\rm Ker}(U)$, then for any
$\varepsilon\in (0,1)$ there exists $\sigma \subset \Gamma_D$
with $$\vert \sigma\vert\geqslant \left[(1-\varepsilon)^2\frac{ \Vert
U\Vert_{{\rm HS}}^2}{\Vert U\Vert^2}\right] $$ such that
\[s_\mathrm{min}\left(U_{\sigma}D_{\sigma}^{-1}\right)>
\frac{\varepsilon \|U\|_{{\rm HS}}}{\|D\|_{{\rm HS}}},\] where
$s_\mathrm{min}$ denotes the smallest singular value.
\end{theo}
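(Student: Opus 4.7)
The proof will adapt the deterministic barrier method of Spielman--Srivastava, with one key modification: replace the uniform averaging over columns by a weighted averaging that takes the multipliers $\alpha_j$ into account. Write $v_j := Ue_j$ for the columns of $U$ and, for $j \in \Gamma_D$, set $w_j := v_j/\alpha_j$; these are the nonzero columns of $UD^{-1}$. The hypothesis $\mathrm{Ker}(D) \subset \mathrm{Ker}(U)$ means $v_j = 0$ whenever $\alpha_j = 0$, which implies that $\sigma$ may be chosen inside $\Gamma_D$ with no loss, and yields the reconstruction identity
\[
\sum_{j \in \Gamma_D} \alpha_j^2 \, w_j w_j^T \;=\; \sum_{j=1}^m v_j v_j^T \;=\; UU^T.
\]
Since $s_{\min}(U_\sigma D_\sigma^{-1})^2 = \lambda_{\min}\!\bigl(\sum_{j \in \sigma} w_j w_j^T\bigr)$, the goal is reduced to constructing $\sigma \subset \Gamma_D$ of the prescribed cardinality with $\lambda_{\min}\!\bigl(\sum_{j \in \sigma} w_j w_j^T\bigr) > \varepsilon^2 \|U\|_{\mathrm{HS}}^2/\|D\|_{\mathrm{HS}}^2$.

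I build $\sigma$ one index at a time via a single (lower) barrier. Set $A_0 = 0$ and choose $l_0 < 0$; at step $k$ I pick $j_{k+1} \in \Gamma_D \setminus \{j_1,\dots,j_k\}$, let $A_{k+1} = A_k + w_{j_{k+1}} w_{j_{k+1}}^T$, and shift $l_{k+1} = l_k + \delta$ for a carefully chosen $\delta > 0$. The invariant to maintain is that $l_{k+1}$ remains a lower barrier for $A_{k+1}$, which, following Spielman--Srivastava, I encode by requiring the potential $\Phi^l(A) := \mathrm{Tr}\bigl((A - lI)^{-1}\bigr)$ not to increase. Applying the Sherman--Morrison formula to the rank-one update converts this into an explicit sufficient condition of the form $L_k(w_{j_{k+1}}) \leq 1$, where $L_k(w) = w^T B_k^{(1)} w + w^T B_k^{(2)} w$ for matrices $B_k^{(1)}, B_k^{(2)}$ depending only on $A_k$, $l_k$, and $\delta$.

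The crux is to show such an index exists at every step, and this is where the weighted average enters. Instead of averaging $L_k(w_j)$ uniformly, I take the average against the probabilities $p_j := \alpha_j^2/\|D\|_{\mathrm{HS}}^2$, $j \in \Gamma_D$. Thanks to the reconstruction identity one gets
\[
\sum_{j \in \Gamma_D} p_j\, w_j w_j^T \;=\; \frac{UU^T}{\|D\|_{\mathrm{HS}}^2},
\]
so the weighted average of $L_k(w_j)$ collapses to a trace expression in $UU^T/\|D\|_{\mathrm{HS}}^2$, controllable by $\|U\|_{\mathrm{HS}}^2 = \mathrm{Tr}(UU^T)$ (for the numerator contributions) and by $\|U\|^2 = \|UU^T\|$ (for bounding inverse factors against the barrier). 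Choosing $\delta$ proportional to $\|U\|_{\mathrm{HS}}^2/(\|D\|_{\mathrm{HS}}^2\, k^*)$ with $k^* = \lfloor(1-\varepsilon)^2 \|U\|_{\mathrm{HS}}^2/\|U\|^2\rfloor$, and $l_0$ matched so that $l_{k^*} = l_0 + k^*\delta \geq \varepsilon^2 \|U\|_{\mathrm{HS}}^2/\|D\|_{\mathrm{HS}}^2$, makes this weighted average at most $1$ uniformly in $k$, forcing the existence of an admissible $j_{k+1}$. After $k^*$ iterations the set $\sigma = \{j_1,\dots,j_{k^*}\}$ satisfies $\lambda_{\min}(A_{k^*}) > l_{k^*}$, which is exactly the desired bound. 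The main technical obstacle is the parameter bookkeeping: matching $\delta$ and $l_0$ so that the weighted average inequality survives every iteration while simultaneously delivering the claimed cardinality and singular value constants; this is where the switch from uniform to $p_j$-weighted averaging is essential, since only this weighting produces the ratio $\|U\|_{\mathrm{HS}}/\|D\|_{\mathrm{HS}}$ rather than a quantity involving the individual $\alpha_j$.
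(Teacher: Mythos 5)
Your reduction to the rank-one sum and your observation that the reconstruction identity $\sum_{j\in\Gamma_D}\alpha_j^2\,w_jw_j^T = UU^T$ underlies the appearance of $\|D\|_{\rm HS}^2$ are both correct, and they are exactly the ingredients the paper relies on (the paper sums condition~(\ref{condition5}) over $j\in\Gamma_D$ rather than taking a $p_j$-weighted average, but these are the same pigeonhole step up to normalization). The genuine gap is in the barrier set-up, and it is fatal as written. You propose the Batson--Spielman--Srivastava lower barrier: potential $\Phi^l(A) = \mathrm{Tr}\bigl((A-lI)^{-1}\bigr)$, barrier starting at $l_0 < 0$, increasing by $\delta$ per step, with the invariant $\lambda_{\min}(A_k) > l_k$ and the final claim $l_{k^*} \geq \varepsilon^2\|U\|_{\rm HS}^2/\|D\|_{\rm HS}^2 > 0$. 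But $A_k$ is an $n\times n$ matrix of rank $k$; for $k<n$ it has $n-k$ zero eigenvalues, so $\lambda_{\min}(A_k)=0$ and the invariant forces $l_k<0$ forever. Quantitatively, the zero eigenvalues contribute $(n-k)/(-l)$ to $\Phi^l(A_k)$, which blows up as $l\uparrow 0$, so the potential cannot stay bounded while the barrier crosses zero. Thus the iteration can never certify a \emph{positive} lower bound on the smallest \emph{positive} eigenvalue. The imprecision is already visible in your reduction, where you write $\lambda_{\min}\bigl(\sum_{j\in\sigma}w_jw_j^T\bigr)$ for a quantity that is literally $0$; what the theorem needs is the smallest \emph{nonzero} eigenvalue together with rank exactly $|\sigma|$.

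The paper avoids this by running the barrier in the opposite direction with a different potential. It uses $\phi(A,b)=\mathrm{Tr}\bigl(U^t(A-bI)^{-1}U\bigr)$ (weighted by $U$), starts at $b_0=\varepsilon\|U\|_{\rm HS}^2/\|D\|_{\rm HS}^2>0$, and \emph{decreases} the barrier by $\delta$ per step down to $b_k=\varepsilon^2\|U\|_{\rm HS}^2/\|D\|_{\rm HS}^2$, which stays positive throughout. Because the barrier remains positive, the kernel contributes a finite negative term $-\|ZU\|_{\rm HS}^2/b$ to the potential rather than a divergence. The invariant is not $\lambda_{\min}(A_l)>b_l$ but ``$A_l$ has exactly $l$ \emph{nonzero} eigenvalues, all greater than $b_l$,'' and the step lemma guaranteeing this (cited as Lemma~6.3 of \cite{srivastava}) requires the strict inequality $v^t(A-b'I)^{-1}v<-1$, which simultaneously certifies the rank increase and keeps the new eigenvalue above the barrier. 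This combined condition is then merged with the potential-decrease requirement into inequality~(\ref{condition4}), and the argument needs a further decomposition of the trace into image and kernel parts to close the estimate. Your proposal has no analogue of the rank-increase condition $v^t(A-b'I)^{-1}v<-1$, and without it there is no mechanism forcing the newly added vector to create a nonzero eigenvalue above the threshold or keeping the selected columns linearly independent.
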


Note that given a matrix $U$, if we take $D$ to be the identity operator, we recover
Theorem~B. Taking 
$D$ the diagonal matrix with diagonal
entries the norms of the columns of $U$, it is easy to see that we recover 
the "normalized" restricted invertibility part of Theorem~A 
with $c_1(\varepsilon)=\varepsilon$.

In Section 2, we give the proof of the main result. In section 3, 
we use Theorem~\ref{theoreme-principal} to give 
an alternative proof for the
proportional Dvoretzky-Rogers factorization; in the symmetric case, 
we recover the best known dependence and improve the constants involved 
which allows us to improve the estimate of the Banach-Mazur distance to the 
cube; while in the nonsymmetric case, we improve the best known dependence 
for the proportional Dvoretzky-Rogers factorization. 
Finally, in Section 4 we give a new proof of a theorem due to
Kashin-Tzafriri \cite{kashin-tzafriri} which deals with the norm of coordinate projections
of a matrix; our proof slightly improves the result of Kashin-Tzafriri and 
has the advantage of producing a deterministic algorithm.

\section{Proof of Theorem\,\ref{theoreme-principal}}

The proof of Theorem\,\ref{theoreme-principal} will be an adaptation of the argument 
used by Spielman-Srivastava \cite{MR2956233} in order to prove Theorem~B.

Since the rank and the eigenvalues of $(U_\sigma
D_\sigma^{-1})^t\cdot (U_\sigma D_\sigma^{-1})$ and $(U_\sigma
D_\sigma^{-1})\cdot (U_\sigma D_\sigma^{-1})^t$ are the same, it
suffices to prove that $(U_\sigma D_\sigma^{-1})\cdot (U_\sigma
D_\sigma^{-1})^t$ has rank equal to $k=\vert \sigma\vert$ and its
smallest positive eigenvalue is greater than
$\varepsilon^2\frac{\Vert U\Vert_{{\rm HS}}^2}{\Vert D\Vert_{{\rm
HS}}^2}$. Note that $$(U_\sigma D_\sigma^{-1})\cdot (U_\sigma
D_\sigma^{-1})^t =\displaystyle \sum_{j\in\sigma}
\left(\frac{Ue_j}{\alpha_j}\right)\cdot \left(\frac{Ue_j}{\alpha_j}\right)^t$$ We are going to
construct the matrix $A_k=\sum_{j\in\sigma}
\left(\frac{Ue_j}{\alpha_j}\right)\cdot \left(\frac{Ue_j}{\alpha_j}\right)^t$ by iteration. We
begin by setting $A_0=0$ and at each step we will be adding a rank
one matrix $\left(\frac{Ue_j}{\alpha_j}\right)\cdot \left(\frac{Ue_j}{\alpha_j}\right)^t$ for
a suitable $j$, which will give a new positive eigenvalue. This will
guarantee that the vector $\frac{Ue_j}{\alpha_j}$ chosen in each step
is linearly independent from the previous ones.

If $A$ and $B$ are symmetric matrices, we write $A\preceq B$ if
$B-A$ is a positive semidefinite matrix. Recall the Sherman-Morrison
Formula which will be needed in the proof. For any invertible matrix
$A$ and any vector $v$ we have
$$(A+v\cdot v^{t})^{-1}=A^{-1}-\frac{A^{-1}v\cdot v^{t}A^{-1}}{1+v^{t}A^{-1}v}.$$
We will also apply the following lemma which appears as Lemma 6.3 in
\cite{srivastava}:

\begin{lem}
Suppose that $A\succeq 0$ has $q$ nonzero eigenvalues, all greater
than $b'>0$. If $v\neq 0$ and
\begin{equation}\label{condition1}
v^{t} (A-b'I)^{-1}v<-1,
\end{equation}
then $A+vv^{t}$ has $q+1$ nonzero eigenvalues, all greater than
$b'$.
\end{lem}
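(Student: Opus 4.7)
The plan is to combine Weyl's rank-one interlacing inequalities with the matrix determinant lemma, and then pin down the ``new'' eigenvalue by a parity argument on the sign of $\det(A+vv^t-b'I)$. First, the hypothesis is meaningful: since the nonzero eigenvalues of $A$ all exceed $b'>0$ and $b'\neq 0$, the value $b'$ is not an eigenvalue of $A$, so $A-b'I$ is invertible. Because $vv^t\succeq 0$, Weyl's inequalities for rank-one perturbations give
$$\lambda_i(A)\;\le\;\lambda_i(A+vv^t)\;\le\;\lambda_{i-1}(A),$$
with the convention $\lambda_0=+\infty$. The left inequality, applied for $i=1,\dots,q$, already produces $q$ eigenvalues of $A+vv^t$ strictly above $b'$, while the right inequality at $i=q+2$ forces $\lambda_{q+2}(A+vv^t)\le\lambda_{q+1}(A)=0$, which must then vanish by positive semidefiniteness. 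The same reasoning gives $\lambda_j(A+vv^t)=0$ for every $j\ge q+2$, so $A+vv^t$ has at most $q+1$ nonzero eigenvalues and the only question is whether the single candidate $\lambda_{q+1}(A+vv^t)$ lies above $b'$.

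For that I would invoke the matrix determinant lemma (a direct companion of the Sherman--Morrison identity recalled in the text):
$$\det(A+vv^t-b'I)\;=\;\det(A-b'I)\,\bigl(1+v^t(A-b'I)^{-1}v\bigr).$$
The factor $\det(A-b'I)=\prod_{i=1}^q(\lambda_i(A)-b')\cdot(-b')^{n-q}$ has sign $(-1)^{n-q}$, while the parenthesis is strictly negative by hypothesis, so $\det(A+vv^t-b'I)$ is nonzero with sign $(-1)^{n-q+1}$.

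To finish, I would observe that the eigenvalues of the symmetric matrix $A+vv^t-b'I$ consist of the $q$ strictly positive top ones coming from the first paragraph, $n-q-1$ copies of $-b'<0$ coming from the kernel of $A+vv^t$, and the single unknown $\lambda_{q+1}(A+vv^t)-b'$. Since the sign of the determinant of a symmetric matrix equals $(-1)^{\#\{\text{negative eigenvalues}\}}$, a value $\lambda_{q+1}(A+vv^t)\le b'$ would yield at least $n-q$ non-positive eigenvalues, giving sign $(-1)^{n-q}$ or a vanishing determinant, both contradicting the previous paragraph. Hence $\lambda_{q+1}(A+vv^t)>b'$, and $A+vv^t$ has exactly $q+1$ nonzero eigenvalues, all strictly greater than $b'$. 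The only delicate point is the parity bookkeeping in this last step; the hypothesis $v^t(A-b'I)^{-1}v<-1$ is used exactly once, to flip the sign of the determinant, and the rest is a routine application of Weyl's inequalities and the determinant identity.
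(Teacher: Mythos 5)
Your proof is correct. Note that the paper does not actually prove this lemma; it only cites it as Lemma~6.3 of Srivastava's thesis \cite{srivastava}, so there is no in-text proof to compare against. Your argument is a clean, self-contained alternative: Weyl's rank-one interlacing pins down all eigenvalues of $A+vv^t$ except $\lambda_{q+1}$, and then the matrix determinant lemma plus a sign count forces $\lambda_{q+1}(A+vv^t)>b'$. The bookkeeping is right: the $n-q-1$ eigenvalues indexed $q+2,\dots,n$ of $A+vv^t$ are $0$ (bounded below by $0$ via positive semidefiniteness and above by $\lambda_{q+1}(A)=0$ via interlacing), the top $q$ strictly exceed $b'$, $\det(A-b'I)$ has sign $(-1)^{n-q}$, the factor $1+v^t(A-b'I)^{-1}v$ is strictly negative by hypothesis, and the resulting sign $(-1)^{n-q+1}$ of $\det(A+vv^t-b'I)$ is incompatible with $\lambda_{q+1}(A+vv^t)\le b'$, which would give either a zero determinant or sign $(-1)^{n-q}$. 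The standard proof (as in Srivastava's thesis and related work) proceeds via the characteristic polynomial identity $\det(xI-A-vv^t)=\det(xI-A)\bigl(1-v^t(xI-A)^{-1}v\bigr)$ and analyzes the rational function $x\mapsto 1-v^t(xI-A)^{-1}v$ near $x=b'$; your determinant-sign version is essentially this argument specialized to the single point $x=b'$, so it is the same idea packaged with interlacing instead of a continuity or monotonicity argument on the secular function. Either route is fine; yours is perhaps slightly more elementary in that it avoids discussing the qualitative behavior of the resolvent between eigenvalues.
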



\medskip

For any symmetric matrix $A$ and any $b>0$, we define $$\phi(A,b)={\rm
Tr}\left(U^{t}(A-bI)^{-1}U\right)$$ as the potential corresponding to
the barrier $b$. 

At each step $l$, the matrix already constructed is
denoted by $A_l$ and the barrier by $b_l$. Suppose that $A_l$ has $l$ nonzero
eigenvalues all greater than $b_l$. As mentioned before, we will try
to construct $A_{l+1}$ by adding a rank one matrix $v\cdot v^{t}$ to
$A_l$ so that $A_{l+1}$ has $l+1$ nonzero eigenvalues all greater
than $b_{l+1}=b_l-\delta$ and $\phi(A_{l+1},b_{l+1})\leqslant
\phi(A_l,b_l)$. Note that
\begin{align*}
\phi(A_{l+1},b_{l+1})&={\rm Tr}\left(U^{t}(A_l+vv^{t}-b_{l+1}I)^{-1}U\right)\\
&={\rm Tr}\left(U^{t}(A_{l}-b_{l+1}I)^{-1}U\right)-
{\rm Tr}\left(\frac{U^{t}(A_l-b_{l+1}I)^{-1}vv^{t}(A_l-b_{l+1}I)^{-1}U}{1+v^{t}(A_l-b_{l+1}I)^{-1}v}\right)\\
&=\phi(A_{l},b_{l+1})-\frac{v^{t}(A_l-b_{l+1}I)^{-1}UU^{t}(A_l-b_{l+1}I)^{-1}v}{1+v^{t}(A_l-b_{l+1}I)^{-1}v}.
\end{align*}
So, in order to have $\phi(A_{l+1},b_{l+1})\leqslant \phi(A_l,b_l)$, we
must choose a vector $v$ verifying
\begin{equation}\label{condition2}
-\frac{v^{t}(A_l-b_{l+1}I)^{-1}UU^{t}(A_l-b_{l+1}I)^{-1}v}{1+v^{t}(A_l-b_{l+1}I)^{-1}v}\leqslant
\phi(A_{l},b_{l})-\phi(A_{l},b_{l+1}).
\end{equation}
Since $v^{t}(A_l-b_{l+1}I)^{-1}UU^{t}(A_l-b_{l+1}I)^{-1}v$ and
$\phi(A_{l},b_{l})-\phi(A_{l},b_{l+1})$ are positive, choosing $v$
verifying condition (\ref{condition1}) with $b'=b_{l+1}$ and condition (\ref{condition2}) is
equivalent to choosing $v$ which satisfies the following:
$$
v^{t}(A_l-b_{l+1}I)^{-1}UU^{t}(A_l-b_{l+1}I)^{-1}v\leqslant
\left(\phi(A_{l},b_{l})-\phi(A_{l},b_{l+1})\right)\left(-1-v^{t}(A_l-b_{l+1}I)^{-1}v\right)
$$
Since $UU^{t}\preceq \Vert U\Vert^{2}Id$ and
$(A_l-b_{l+1}I)^{-1}$ is symmetric, it is sufficient to choose $v$
so that
\begin{equation}\label{condition4}
v^{t}(A_l-b_{l+1}I)^{-2}v \leqslant \frac{1}{ \Vert U\Vert^{2}}
\left(\phi(A_{l},b_{l})-\phi(A_{l},b_{l+1})\right)\left(-1-v^{t}(A_l-b_{l+1}I)^{-1}v\right)
\end{equation}

Recall the notation $\Gamma_D:=\{j\leqslant m\mid \alpha_j\neq 0\}$ where
$(\alpha_j)_{j\leqslant m}$ are the diagonal entries of $D$. Since
we have assumed that $ {\rm Ker}(D)\subset {\rm Ker}(U)$, we have
$$\Vert U\Vert_{{\rm {\rm HS}}}^{2}= \displaystyle \sum_{j\leqslant
m} \left\Vert Ue_j\right\Vert_2^{2} = \sum_{j\in \Gamma_D} \left\Vert
Ue_j\right\Vert_2^{2} \leqslant \vert \Gamma_D\vert \cdot \Vert
U\Vert^{2},$$ and thus $\vert \Gamma_D\vert \geqslant \frac{ \Vert
U\Vert_{{\rm HS} }^{2} }{ \Vert U \Vert^{2}}$. At each step, we
will select a vector $v$ satisfying (\ref{condition4}) among
$(\frac{Ue_j}{ \alpha_j})_{j\in \Gamma_D}$. Our task therefore is to find
$j\in \Gamma_D$ such that
\begin{equation}\label{condition5}
(Ue_j)^{t}(A_l-b_{l+1}I)^{-2}Ue_j \leqslant
\frac{\phi(A_{l},b_{l})-\phi(A_{l},b_{l+1})}{ \Vert U\Vert^{2}}
\left(-\alpha_j^{2}-(Ue_j)^{t}(A_l-b_{l+1}I)^{-1}Ue_j\right)
\end{equation}
The existence of such a $j\in \Gamma_D$ is guaranteed by the fact that
condition (\ref{condition5}) holds true if we take the sum over all
$(\frac{Ue_j}{\alpha_j})_{j\in D}$. The hypothesis $ {\rm
Ker}(D)\subset  {\rm Ker}(U)$ implies that:
\begin{itemize}
\item[$\bullet$] $\displaystyle \sum_{j\in \Gamma_D}(Ue_j)^{t}(A_l-b_{l+1}I)^{-2}Ue_j = {\rm
Tr}\left(U^{t}(A_l-b_{l+1}I)^{-2}U\right)$,
\item[$\bullet$] $\displaystyle \sum_{j\in \Gamma_D} (Ue_j)^{t}(A_l-b_{l+1}I)^{-1}Ue_j={\rm
Tr}\left(U^{t}(A_l-b_{l+1}I)^{-1}U\right)$.
\end{itemize}
Therefore it is enough to prove that, at each step, one has
\begin{equation}\label{condition6}
{\rm Tr}(U^{t}(A_l-b_{l+1}I)^{-2}U) \leqslant
\frac{\phi(A_{l},b_{l})-\phi(A_{l},b_{l+1})}{ \Vert U\Vert^{2}} \left(
-\Vert D\Vert_{{\rm HS}}^{2} - \phi(A_{l},b_{l+1})\right)
\end{equation}

The rest of the proof is similar to the one in \cite{srivastava}.
One just needs to replace $m$ by $\Vert D\Vert_{{\rm HS}}^{2}$. For
the sake of completeness, we include the proof. The next lemma  will
determine the conditions required at each step in order to prove
(\ref{condition6}).

\begin{lem}\label{lemma2-invertibility}
Let $A$ be an $n\times n$ symmetric positive semidefinite matrix. 
Suppose that $A$ has $l$ nonzero eigenvalues all greater than
$b_l$, and write $Z$ for the orthogonal projection onto the kernel
of $A$. If
\begin{equation}\label{condition1-lemma}
\phi(A,b_l)\leqslant - \Vert D\Vert_{{\rm HS}}^{2} - \frac{ \Vert
U\Vert^{2}}{\delta}
\end{equation}
and
\begin{equation}\label{condition2-lemma}
0<\delta < b_l\leqslant \delta \frac{ \Vert ZU\Vert_{{\rm
HS}}^{2}}{\Vert U\Vert^{2}},
\end{equation}
then there exists $i\in \Gamma_D$ such that $A' :=A + \left(\frac{ Ue_i}{
\alpha_i}\right)\cdot \left(\frac{ Ue_i}{ \alpha_i}\right)^t$ has $l+1$ nonzero
eigenvalues all greater than $b_{l+1}:= b_l - \delta$ and
$\phi(A',b_{l+1})\leqslant \phi(A,b_l)$.
\end{lem}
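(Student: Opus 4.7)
The strategy is to verify inequality (\ref{condition6}), which, as already shown in the preceding paragraph, implies via averaging over $j \in \Gamma_D$ the existence of an index $i \in \Gamma_D$ for which $v := Ue_i/\alpha_i$ satisfies (\ref{condition5}) and hence (\ref{condition4}). For any such $v$ the left-hand side of (\ref{condition4}) is non-negative while the factor $\phi(A_{l},b_{l})-\phi(A_{l},b_{l+1})$ on the right is strictly positive, so the remaining factor $-1 - v^{t}(A_l - b_{l+1}I)^{-1}v$ must be non-negative as well; this is exactly condition (\ref{condition1}) of the Sherman--Morrison eigenvalue lemma quoted earlier, which then yields the required $l+1$ nonzero eigenvalues of $A' = A + vv^{t}$ above $b_{l+1}$. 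The inequality $\phi(A',b_{l+1})\leq\phi(A,b_l)$ is built into the choice of $v$ via the Sherman--Morrison identity used to derive (\ref{condition2}).

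The core of the proof is therefore to establish (\ref{condition6}). Using the spectral decomposition $A = \sum_i \lambda_i u_i u_i^{t}$ together with the orthogonal projection $Z$ onto $\mathrm{Ker}(A)$, one writes
\[
\phi(A,b) \;=\; \sum_{\lambda_i > 0} \frac{\|U^{t} u_i\|^{2}}{\lambda_i - b} \;-\; \frac{\|ZU\|_{{\rm HS}}^{2}}{b},
\qquad
\mathrm{Tr}\bigl(U^{t}(A-bI)^{-2}U\bigr) \;=\; \partial_b \phi(A,b).
\]
Each summand of the first term is convex in $b$ on $(0,b_l]$ (since $\lambda_i > b_l$), so the standard secant-slope estimate bounds its derivative at $b_{l+1}$ by its increment over $[b_{l+1},b_l]$ divided by $\delta$. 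The kernel term $-\|ZU\|_{{\rm HS}}^{2}/b$ is, by contrast, concave, so the analogous bound goes the wrong way; the hypothesis $b_l \leq \delta\,\|ZU\|_{{\rm HS}}^{2}/\|U\|^{2}$ from (\ref{condition2-lemma}) is tailored to absorb this defect and recover the desired one-sided estimate for the full potential.

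To close, I would combine the resulting derivative bound with the potential inequality $\phi(A,b_l)\leq -\|D\|_{{\rm HS}}^{2} - \|U\|^{2}/\delta$ from (\ref{condition1-lemma}) and the monotonicity $\phi(A,b_{l+1})\leq \phi(A,b_l)$ (which holds since $\partial_b\phi \geq 0$ on the relevant range). This produces $-\|D\|_{{\rm HS}}^{2} - \phi(A,b_{l+1}) \geq \|U\|^{2}/\delta > 0$, and multiplying the secant bound through by this positive quantity yields (\ref{condition6}). The principal obstacle is that $\phi$ fails to be globally convex in $b$ because of the kernel contribution, so the one-line convexity argument available in the square case ($D=I$, no kernel) does not suffice here; the quantitative role of the second hypothesis in (\ref{condition2-lemma}) is precisely to neutralize this concave piece.
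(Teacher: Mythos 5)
Your framing is on the right track: you correctly reduce to \eqref{condition6}, correctly decompose $\phi(A,\cdot)$ via the spectral decomposition into a convex part (positive eigenvalues) and a concave kernel contribution $-\|ZU\|_{\rm HS}^2/b$, and correctly identify that the concave piece is the obstruction, with \eqref{condition2-lemma} the hypothesis that must handle it. This is essentially the paper's $P$--$Z$ split in different notation.

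However, your closing step has a genuine gap. You use \eqref{condition1-lemma} together with monotonicity of $\phi$ to bound $-\|D\|_{\rm HS}^2 - \phi(A,b_{l+1}) \geq \|U\|^2/\delta$, and then hope to conclude by showing $\mathrm{Tr}\bigl(U^t(A-b_{l+1}I)^{-2}U\bigr) \leq \Delta_l/\delta$ where $\Delta_l := \phi(A,b_l)-\phi(A,b_{l+1})$. But this last inequality is exactly the global secant/convexity bound, which you yourself observe is \emph{false} because of the kernel term: the defect is $\|ZU\|_{\rm HS}^2/b_{l+1}^2 - \|ZU\|_{\rm HS}^2/(b_l b_{l+1}) = \delta\|ZU\|_{\rm HS}^2/(b_l b_{l+1}^2) > 0$, and nothing on the right-hand side $\Delta_l/\delta$ responds to the hypothesis \eqref{condition2-lemma}. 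The crucial term you discard is the extra $\Delta_l$ in the sharper bound $-\|D\|_{\rm HS}^2 - \phi(A,b_{l+1}) \geq \|U\|^2/\delta + \Delta_l$ obtained directly from \eqref{condition1-lemma}. Keeping it, the target becomes
\[
\mathrm{Tr}\bigl(U^t(A-b_{l+1}I)^{-2}U\bigr) \;\leq\; \frac{\Delta_l}{\delta} + \frac{\Delta_l^2}{\|U\|^2},
\]
and it is the quadratic term $\Delta_l^2/\|U\|^2$ that absorbs the concavity defect: since $\Delta_l \geq \Delta_l^Z = \delta\|ZU\|_{\rm HS}^2/(b_l b_{l+1})$, one has $\Delta_l^2/\|U\|^2 \geq \delta^2\|ZU\|_{\rm HS}^4/(\|U\|^2 b_l^2 b_{l+1}^2)$, which is $\geq \delta\|ZU\|_{\rm HS}^2/(b_l b_{l+1}^2)$ precisely when $b_l \leq \delta\|ZU\|_{\rm HS}^2/\|U\|^2$, i.e.\ under \eqref{condition2-lemma}. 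Without retaining the quadratic term there is no slot where \eqref{condition2-lemma} can act, so your asserted ``absorption'' has no mechanism behind it; the proof as outlined would not close.
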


\begin{proof}[Proof]
As mentioned before, it is enough to prove inequality
(\ref{condition6}). We set
$\Delta_l:=\phi(A,b_l)-\phi(A',b_{l+1})$. By
(\ref{condition1-lemma}), we get $$\phi(A,b_{l+1})\leqslant -\Vert
D\Vert_{{\rm HS}}^{2} - \frac{ \Vert U\Vert^{2}}{\delta} -
\Delta_l.$$ Inserting this in (\ref{condition6}), we see that it is
sufficient to prove the following inequality:
\begin{equation}\label{condition7}
{\rm Tr}\left(U^{t}(A-b_{l+1}I)^{-2}U\right) \leqslant
\Delta_l\left( \frac{\Delta_l}{ \Vert U\Vert^{2}} +
\frac{1}{\delta}\right).
\end{equation}
Now, denote by $P$ the orthogonal projection onto the image of
$A$. We set $$\phi^{P}(A,b_l):={\rm
Tr}\left(U^{t}P(A-b_lI)^{-1}PU\right)\quad\hbox{and}\quad
\Delta_l^{P}:=\phi^{P}(A,b_l)-\phi^{P}(A,b_{l+1})$$ and use similar
notation for $Z$. Since $P$, $Z$ and $A$ commute, one can
write $$\Delta_l=\Delta_l^{P}+\Delta_l^{Z}\quad\hbox{and}\quad
\phi(A,b_l)=\phi^{P}(A,b_l)+\phi^{Z}(A,b_l).$$ Note that:
\begin{align*}
(A - b_l I)^{-1} - (A - b_{l+1} I)^{-1} &=
(A - b_l I)^{-1} (b_l I - A + A - b_{l+1} I) (A - b_{l+1} I)^{-1}\\
& =\delta (A - b_l I)^{-1}  (A - b_{l+1} I)^{-1}
\end{align*}
and since $P(A-b_lI)^{-1}P$ and $P(A-b_{l+1}I)^{-1}P$ are positive
semidefinite, we have:
$$
U^{t}P(A-b_lI)^{-1}PU-U^tP(A-b_{l+1}I)^{-1}PU \succeq \delta U^{t}P(A-b_{l+1}I)^{-2}PU.
$$
Inserting this in (\ref{condition7}), it is enough to prove that:
\begin{equation*}\label{condition8}
{\rm Tr}\left(U^{t}Z(A-b_{l+1}I)^{-2}Z U\right)\leqslant
\Delta_l \left( \frac{\Delta_l}{ \Vert U\Vert^{2}} +
\frac{1}{\delta}\right)-\frac{\Delta_l^{P}}{\delta}.
\end{equation*}
Since $AZ=0$, we have:
\begin{itemize}
\item[$\bullet$] ${\rm Tr}(U^{t}Z(A-b_{l+1}I)^{-2}Z U)= \frac{ \Vert ZU\Vert_{{\rm
HS}}^{2}}{b_{l+1}^{2}}$ and
\item[$\bullet$] $\Delta_l^{Z}= -\frac{ \Vert ZU\Vert_{{\rm HS}}^{2}}{b_{l}}
+\frac{ \Vert ZU\Vert_{{\rm HS}}^{2}}{b_{l+1}}=\delta \frac{ \Vert
ZU\Vert_{{\rm HS}}^{2}}{b_lb_{l+1}}$,
\end{itemize}
so taking into account the fact that $\Delta_l \geqslant
\Delta_l^{Z}\geqslant 0$, it remains to prove the following:
\begin{equation}\label{condition9}
\frac{ \Vert ZU\Vert_{{\rm HS}}^{2}}{b_{l+1}^{2}} \leqslant
\delta^{2} \frac{ \Vert ZU\Vert_{{\rm HS}}^{4}}{\Vert U\Vert_{2
}^{2} b_l^{2}b_{l+1}^{2}} +\frac{ \Vert ZU\Vert_{{\rm
HS}}^{2}}{b_lb_{l+1}}.
\end{equation}
By Hypothesis (\ref{condition2-lemma}), this last inequality follows
by
\begin{equation}
\frac{ \Vert ZU\Vert_{{\rm HS}}^{2}}{b_{l+1}^{2}} \leqslant \delta
\frac{ \Vert ZU\Vert_{{\rm HS}}^{2}}{b_lb_{l+1}^{2}} +\frac{ \Vert
ZU\Vert_{{\rm HS}}^{2}}{b_lb_{l+1}},
\end{equation}
which is trivially true since $b_{l+1}=b_l-\delta$.
\end{proof}

We are now able to complete the proof of
Theorem~\ref{theoreme-principal}. To this end, we must verify that
conditions (\ref{condition1-lemma}) and (\ref{condition2-lemma})
hold at each step. At the beginning we have $A_0=0$ and $Z=Id$, so
we must choose a barrier $b_0$ such that:
\begin{equation}\label{conditionfinale1}
-\frac{\Vert U\Vert_{{\rm HS}}^{2}}{b_0} \leqslant -\Vert
D\Vert_{{\rm HS}}^{2}-\frac{ \Vert U\Vert^{2}}{\delta}
\end{equation}
and
\begin{equation}\label{conditionfinale2}
b_0 \leqslant \delta\frac{\Vert U\Vert_{{\rm HS}}^{2}}{ \Vert
U\Vert^{2}}.
\end{equation}
We choose $$b_0 :=\varepsilon \frac{ \Vert U\Vert_{{\rm HS}}^{2}}{
\Vert D\Vert_{{\rm HS}}^{2}}\quad\hbox{and}\quad \delta:=
\frac{\varepsilon}{1-\varepsilon}\frac{ \Vert U\Vert^{2}}{\Vert
D\Vert_{{\rm HS}}^{2}},$$ and we note that (\ref{conditionfinale1})
and (\ref{conditionfinale2}) are verified. Also, at each step
(\ref{condition1-lemma}) holds because $\phi(A_{l+1},b_{l+1})\leqslant
\phi(A_l,b_l)$. Since $\Vert Z U\Vert_{{\rm HS}}^{2} $ decreases at
each step by at most $\Vert U\Vert^{2}$, the right-hand side of
(\ref{condition2-lemma}) decreases by at most $\delta$, and
therefore (\ref{condition2-lemma}) holds once we replace $b_l$ by
$b_l-\delta$.

Finally note that, after $k=(1-\varepsilon)^{2}\frac{ \Vert
U\Vert_{{\rm HS}}^{2}}{ \Vert U\Vert^{2}}$ steps, the barrier
will be
$$b_k=b_0-k\delta = \varepsilon^{2}\frac{ \Vert U\Vert_{{\rm
HS}}^{2}}{ \Vert D\Vert_{{\rm HS}}^{2}}.$$ This completes the proof.

\section{Proportional Dvoretzky-Rogers factorization}

By the classical Dvoretzky-Rogers lemma \cite{MR0033975}, 
if $X$ is an $n$-dimensional Banach space then there 
exist $x_1,...,x_m\in X$ with $m=\sqrt{n}$ such that for all scalars $(a_j)_{j\leqslant m}$
$$
\max_{j\leqslant m} \vert a_j\vert \leqslant 
\left\Vert \sum_{j\leqslant m} a_jx_j\right\Vert_X
\leqslant c\left(\sum_{j\leqslant m} a_j^2\right)^{\frac{1}{2}},
$$
where $c$ is a universal constant. Bourgain-Szarek \cite{MR947820} proved that 
the previous statement holds for $m$ proportional to $n$, and called the result 
"the proportional Dvoretzky-Rogers factorization":

\begin{theoC}[Proportional Dvoretzky-Rogers factorization]\label{th-proportional-D-R}
Let $X$ be an $n$-dimensional Banach space. $\forall \varepsilon \in (0,1)$, there exist 
$x_1,...,x_k\in X$ with $k\geqslant \left[(1-\varepsilon)n\right]$ such that for all scalars $(a_j)_{j\leqslant k}$
$$
\max_{j\leqslant k} \vert a_j\vert \leqslant 
\left\Vert \sum_{j\leqslant k} a_jx_j\right\Vert_X
\leqslant c(\varepsilon)\left(\sum_{j\leqslant k} a_j^2\right)^{\frac{1}{2}},
$$
where $c(\varepsilon)$ is a constant depending on $\varepsilon$. Equivalently, the identity operator 
$i_{2,\infty}: l_2^k\longrightarrow l_\infty^k$ can be written $i_{2,\infty}= \alpha \circ \beta$ with 
$\beta : l_2^k \longrightarrow X, \alpha : X\longrightarrow l_\infty^k$ and $\Vert \alpha \Vert \cdot \Vert \beta\Vert \leqslant c(\varepsilon)$.
\end{theoC}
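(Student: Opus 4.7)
The plan is to realize $X$ in a position admitting a decomposition of identity, apply Theorem~\ref{theoreme-principal} to this decomposition in order to extract a well-conditioned sub-frame, and then read off the factorization from the Euclidean biorthogonal vectors of that sub-frame.

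In the symmetric case, after an invertible linear change of coordinates I assume $B_2^n$ is John's ellipsoid of $B_X$, so that $\|\cdot\|_X \leq \|\cdot\|_2$. John's theorem supplies contact points $u_1,\ldots,u_m \in \partial B_X\cap\partial B_2^n$ and weights $c_i>0$ with $\sum c_i = n$ and $\sum_i c_i u_iu_i^t = I_n$. Since $B_2^n$ is smooth at $u_i$ and $B_X\supseteq B_2^n$, the unique supporting hyperplane of $B_2^n$ at $u_i$ is also a supporting hyperplane of $B_X$; combined with the symmetry of $B_X$ this gives $B_X\subseteq \{|\langle u_i,\cdot\rangle|\leq 1\}$, so that the functional $g_i(x):=\langle u_i,x\rangle$ lies in $S_{X^*}$. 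I then form the $n\times m$ matrix $U:=[\sqrt{c_i}\,u_i]_{i\leq m}$ and the diagonal matrix $D:=\mathrm{diag}(\sqrt{c_i})$: the decomposition of identity yields $UU^t = I_n$, so $\|U\|=1$ and $\|U\|_{\rm HS}^2 = n = \|D\|_{\rm HS}^2$. Applying Theorem~\ref{theoreme-principal} with parameter $\eta := 1-\sqrt{1-\varepsilon}$ produces $\sigma$ of cardinality at least $\lceil(1-\varepsilon)n\rceil$ such that the matrix $V := U_\sigma D_\sigma^{-1}$, whose columns are the $(u_i)_{i\in\sigma}$, satisfies $s_{\min}(V)>\eta$.

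I then set $x_i := V(V^tV)^{-1}e_i$, the vectors biorthogonal to $(u_i)_{i \in \sigma}$ in the Euclidean inner product. By construction $\langle u_j,x_i\rangle = \delta_{ij}$, and the matrix $W := V(V^tV)^{-1}$ with columns $(x_i)$ satisfies $\|W\|^2 = \|(V^tV)^{-1}\| = s_{\min}(V)^{-2} < \eta^{-2}$. The upper bound then reads $\|\sum_i a_i x_i\|_X \leq \|\sum_i a_i x_i\|_2 \leq \|W\|\cdot\|a\|_2 \leq \eta^{-1}\|a\|_2$, while the lower bound follows from the biorthogonality: $|a_i| = |g_i(\sum_j a_j x_j)| \leq \|g_i\|_{X^*}\cdot\|\sum_j a_j x_j\|_X = \|\sum_j a_j x_j\|_X$. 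This yields the desired factorization with $c(\varepsilon) = (1-\sqrt{1-\varepsilon})^{-1}$; equivalently, the operators $\beta(e_i) = x_i$ and $\alpha(y) = (g_i(y))_i$ factor $i_{2,\infty}$ with $\|\alpha\|\cdot\|\beta\| \leq c(\varepsilon)$.

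The main obstacle I anticipate is the non-symmetric case. There John's decomposition carries the additional first-moment condition $\sum c_i u_i = 0$, and the clean fact that the contact functional $\langle u_i,\cdot\rangle$ has $X^*$-norm one is lost: the tangent hyperplane bounds $B_X$ from one side only, and the one-sided circumradius of $B_X$ relative to the John ellipsoid jumps from $\sqrt{n}$ to $n$. One must therefore either work with the difference body $B_X - B_X$ or use Lewis' position, and adapt the biorthogonality argument so that the functional controlling $a_i$ still has $X^*$-norm of order a constant. I expect the same three-step template (decomposition of identity $\to$ Theorem~\ref{theoreme-principal} $\to$ Euclidean biorthogonal vectors) to go through with these substitutes, yielding a larger but explicit constant $c(\varepsilon)$; this is precisely where the improvement over the previously best-known non-symmetric bound announced in the abstract should materialize.
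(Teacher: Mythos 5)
Your proof is correct, but it runs through a genuinely different door than the paper's. The paper places $B_X$ inside the minimal-volume ellipsoid $B_2^n \supseteq B_X$, applies Theorem~\ref{theoreme-principal} to the John decomposition $Id = \sum c_j x_j x_j^t$ to extract a well-conditioned subset of contact points, and then reads off the $i_{1,2}: \ell_1^k \to \ell_2^k$ factorization directly from those contact points: the left inequality is the restricted-invertibility lower bound on $\|\sum a_j x_j\|_2 \leq \|\sum a_j x_j\|_X$, and the right inequality is just the triangle inequality $\|\sum a_j x_j\|_X \leq \sum |a_j|$ (each $\|x_j\|_X = 1$). The $i_{2,\infty}$ statement is then the dual of Theorem~\ref{dvoretzky-rogers}. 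You instead place $B_2^n$ inside $B_X$ (maximal-volume position, so $\|\cdot\|_X \leq \|\cdot\|_2$), apply Theorem~\ref{theoreme-principal} the same way, but then pass to the Euclidean biorthogonal system $x_i = V(V^tV)^{-1}e_i$ of the selected contact points $V$, and produce the $i_{2,\infty}$ factorization directly: the contact functionals $g_i = \langle u_i, \cdot\rangle \in S_{X^*}$ give $\max_i |a_i| \leq \|\sum a_j x_j\|_X$ by biorthogonality, and $\|W\| = s_{\min}(V)^{-1} < \eta^{-1}$ together with $\|\cdot\|_X \leq \|\cdot\|_2$ gives the upper bound. Quantitatively the two routes land at the same place (taking the Theorem~\ref{theoreme-principal} parameter to be $1-\sqrt{1-\varepsilon}$ so that $k \geq [(1-\varepsilon)n]$ yields $c(\varepsilon) = (1-\sqrt{1-\varepsilon})^{-1}$). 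What your approach buys is that it produces the $i_{2,\infty}$ factorization through $X$ itself rather than obtaining it by dualizing an $i_{1,2}$ factorization, which matches the literal statement of Theorem~C more directly; what the paper's approach buys is a shorter argument (no biorthogonal matrix needed) and, more importantly, that the minimal-ellipsoid normalization and the triangle-inequality upper bound are exactly what carries over to the nonsymmetric setting in the paper's Theorem~3.5, whereas your contact-functional argument genuinely breaks there, as you correctly anticipate. (One cosmetic point: you write $\lceil (1-\varepsilon)n\rceil$ where Theorem~\ref{theoreme-principal} only guarantees $[(1-\varepsilon)n]$, the integer part; this is what Theorem~C asks for anyway.)
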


Finding the right dependence on $\varepsilon$ is an important problem and the optimal 
result is not known yet. In \cite{MR1081810}, Szarek showed that the dependence cannot be 
better than $c\varepsilon^{-\frac{1}{10}}$. Szarek-Talagrand \cite{MR1008718} proved that the previous 
result holds with $c(\varepsilon)=c\varepsilon^{-2}$ and in \cite{MR1353450} and \cite{MR1301496} 
Giannopoulos improved the dependence to get $c\varepsilon^{-\frac{3}{2}}$ and $c\varepsilon^{-1}$. 
In all these results, a factorization for the identity operator $i_{1,2}: l_1^k\longrightarrow l_2^k$ was proven 
and by duality the factorization for $i_{2,\infty}$ was deduced. The previous proofs used some 
geometric results, technical combinatorics and Grothendieck's factorization theorem. 
Here we present a direct proof using Theorem~\ref{theoreme-principal} 
which allows us to recover the best known dependence on $\varepsilon$ 
and improve the universal constant involved. \\

Note that Theorem~C can be formulated with symmetric convex bodies. 
 In \cite{MR1796719}, Litvak and Tomczak-Jaegermann proved a nonsymmetric version 
 of the proportional Dvoretzky-Rogers factorization:

\begin{theoD}[Litvak-Tomczak-Jaegermann]
Let $K\subset \mathbb{R}^n$ be a convex body, such that $B_2^n$ is 
the ellipsoid of minimal volume containing $K$. Let $\varepsilon \in (0,1)$ 
and set $k= \left[(1-\varepsilon) n \right]$. There exist vectors $y_1,y_2,...,y_k$ 
in $K$, and an orthogonal projection $P$ in $\mathbb{R}^n$ with rank $P \geqslant k$ 
such that for all scalars $t_1,...,t_k$
$$c\varepsilon^3 \left( \sum_{j=1}^k \vert t_j\vert^2\right)^{\frac{1}{2}} 
\leqslant \left\Vert \sum_{j=1}^k t_j Py_j \right\Vert_{PK} 
\leqslant \frac{6}{\varepsilon} \sum_{j=1}^k \vert t_j\vert ,$$
where $c>0$ is a universal constant.
\end{theoD}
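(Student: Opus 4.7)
The plan is to apply Theorem~\ref{theoreme-principal} to the John decomposition associated with the outer L\"owner ellipsoid $B_2^n$. Since $B_2^n$ is the minimum-volume ellipsoid containing $K$, John's theorem yields contact points $v_1,\ldots,v_m\in\partial K\cap S^{n-1}$ and positive weights $c_1,\ldots,c_m$ with
\[
\sum_{i=1}^m c_i\, v_i v_i^t = {\rm Id},\qquad \sum_{i=1}^m c_i\, v_i = 0,\qquad \sum_{i=1}^m c_i = n.
\]
Let $U$ be the $n\times m$ matrix with columns $\sqrt{c_i}\,v_i$ and $D$ the diagonal matrix with entries $\sqrt{c_i}$. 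Then $UU^t={\rm Id}$ gives $\Vert U\Vert=1$ and $\Vert U\Vert_{{\rm HS}}^2=n$, while $\Vert D\Vert_{{\rm HS}}^2=n$; the kernel hypothesis of Theorem~\ref{theoreme-principal} is automatic since $D$ is invertible.

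Set $\delta:=1-\sqrt{1-\varepsilon}$, so that $(1-\delta)^2=1-\varepsilon$ and $\delta\geq \varepsilon/2$. Applying Theorem~\ref{theoreme-principal} with parameter $\delta$ yields $\sigma$ with $|\sigma|\geq \lfloor(1-\varepsilon)n\rfloor=:k$ and $s_{\min}(U_\sigma D_\sigma^{-1})>\delta$. The columns of $U_\sigma D_\sigma^{-1}$ being exactly $(v_i)_{i\in\sigma}$, this reads
\[
\Big\Vert \sum_{i\in\sigma} t_i v_i\Big\Vert_2 \geq \delta\,\Vert t\Vert_2,
\]
so the selected $v_i$ are linearly independent and $F:={\rm span}\{v_i:i\in\sigma\}$ has dimension $k$. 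Take $P$ to be the orthogonal projection onto $F$ and enumerate the selected contact points as $y_1,\ldots,y_k\in K$; these satisfy $Py_j=y_j$. For the upper bound, $y_j=P(y_j)\in PK$ gives $\Vert Py_j\Vert_{PK}\leq 1$, so $\Vert\sum t_j Py_j\Vert_{PK}\leq \sum|t_j|$. For the lower bound, $K\subset B_2^n$ forces $PK\subset PB_2^n=B_2^n\cap F$, hence $\Vert\cdot\Vert_{PK}\geq \Vert\cdot\Vert_2$ on $F$, and combining with the estimate above,
\[
\Big\Vert\sum_{j=1}^k t_j Py_j\Big\Vert_{PK} \geq \delta\,\Vert t\Vert_2.
\]

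The main step is the choice of $P$: projecting onto the span of the selected contact points simultaneously preserves each $y_j$ (trivializing the upper bound) and routes the lower bound through the Euclidean norm via the clean inclusion $PK\subset B_2^n\cap F$. Notice that the nonsymmetric John identity $\sum c_i v_i=0$ is not used anywhere --- only the rank-one-sum identity and the trace condition are --- so the asymmetry produces no quantitative cost. The resulting constants are of order $\varepsilon$ and $1$, strictly sharper than the $c\varepsilon^3$ and $6/\varepsilon$ of Theorem~D, consistent with the improvement announced in the abstract.
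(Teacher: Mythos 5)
Your upper-bound step is where the argument breaks, and it breaks for exactly the reason that makes the nonsymmetric case genuinely harder. You write $\left\Vert\sum t_j Py_j\right\Vert_{PK}\leq\sum|t_j|$ from $\Vert Py_j\Vert_{PK}\leq 1$, but $PK$ is a \emph{nonsymmetric} convex body, so its Minkowski gauge does not satisfy $\Vert -x\Vert_{PK}=\Vert x\Vert_{PK}$. When $t_j<0$ you need to control $\Vert -Py_j\Vert_{PK}$, and that quantity is not $\leq 1$; in general it can be as large as order $n$ (recall that for a nonsymmetric body in John position only $\frac{1}{n}B_2^n\subset K$ holds). Your remark that ``the asymmetry produces no quantitative cost'' and that $\sum c_i v_i=0$ is unused is precisely where the proposal goes wrong: the whole content of Theorem~D is the price paid for asymmetry, and your argument quietly assumes symmetry away.

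The correct route — both in Litvak--Tomczak-Jaegermann and in the paper's own improved Theorem~3.4 — is to choose $P$ so that the negatives of the projected contact points become controllable. One first selects contact points via Theorem~\ref{theoreme-principal}, then partitions the selected index set $\sigma_1$ into roughly $\frac{\varepsilon}{2}|\sigma_1|$ blocks $A_l$ of size about $4/\varepsilon$, forms $z_l=\sum_{i\in A_l}x_i$, and takes $P$ to be the orthogonal projection onto $\operatorname{span}\{z_l\}^\perp$. Since $Pz_l=0$, one gets $-Px_j=\sum_{i\in A_l,\,i\neq j}Px_i$, exhibiting $-Px_j$ as a sum of $|A_l|-1\lesssim 4/\varepsilon$ elements of $PK$; this is where the $1/\varepsilon$ in the upper bound comes from, and it is unavoidable in this scheme. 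The side effect is that this $P$ is \emph{not} the projection onto $\operatorname{span}\{x_i:i\in\sigma_1\}$, so the lower bound no longer follows directly from the first extraction; the paper conjugates $P$ by the isomorphism $T(e_j)=x_j$, passes to $P''$, and runs Theorem~\ref{theoreme-principal} a \emph{second} time on the matrix with columns $P''e_j$. The two applications together account for the $\varepsilon^2$ (rather than your claimed $\varepsilon$) in the lower bound. Your constants of order $\varepsilon$ and $1$ would be a dramatic improvement even over the paper's own sharpening of Theorem~D, which should have been a warning sign; they do not hold, because of the asymmetry issue above.

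Minor additional issues: you need to fix the center of $K$ (say at the center of the John ellipsoid) for $\Vert\cdot\Vert_{PK}$ to be a gauge, and the theorem's stated bounds are with respect to the symmetrized body $-PK\cap PK$ on the right-hand side in the paper's formulation, which again only matters because of asymmetry.
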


Using again Theorem~\ref{theoreme-principal} combined with some tools developed 
in \cite{MR947820} and \cite{MR1796719}, we will be able to improve the dependence on $\varepsilon$ 
in the previous statement.

\subsection{The symmetric case}

Let us start with the original proportional Dvoretzky-Rogers factorization. We will 
prove the following:

\begin{theo}\label{dvoretzky-rogers}
Let $X$ be an $n$-dimensional Banach space. $\forall \varepsilon \in (0,1)$, there exist 
$x_1,...,x_k\in X$ with $k\geqslant \left[(1-\varepsilon)^2n\right]$ such that for all scalars $(a_j)_{j\leqslant m}$
$$
\varepsilon \left(\sum_{j\leqslant k} a_j^2\right)^{\frac{1}{2}}\leqslant 
\left\Vert \sum_{j\leqslant k} a_jx_j\right\Vert_X
\leqslant  \sum_{j\leqslant k} \vert a_j\vert 
$$
Equivalently, the identity operator
$i_{1,2}: l_1^{k} \longrightarrow l_{2}^k$ can be written
as $i_{1,2}= \alpha \circ \beta$, where $\beta : l_{1}^k
\longrightarrow X$, $\alpha : X  \longrightarrow l_{2}^k $ and
$\Vert \alpha\Vert \cdot \Vert \beta \Vert \leqslant
\varepsilon^{-1}$.
\end{theo}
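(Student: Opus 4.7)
The plan is to derive Theorem~\ref{dvoretzky-rogers} as a direct consequence of Theorem~\ref{theoreme-principal} applied to a John decomposition of the identity, so that the proportional restricted invertibility furnishes the desired linearly independent vectors with a controlled Euclidean-type lower bound.

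First, I would put $X$ in John's position for the \emph{minimum}-volume ellipsoid: after a linear change of coordinates we may assume $B_2^n$ is the minimal-volume ellipsoid containing $B_X$, which forces $\|x\|_2 \leq \|x\|_X$ on $\mathbb{R}^n$ and produces contact points $u_1,\ldots,u_m$ on $\partial B_X \cap S^{n-1}$ together with positive weights $c_1,\ldots,c_m$ satisfying
$$\sum_{j=1}^m c_j\, u_j u_j^t = I_n, \qquad \sum_{j=1}^m c_j = n.$$

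Next, I would feed this data into Theorem~\ref{theoreme-principal}. Let $U$ be the $n \times m$ matrix whose $j$-th column is $\sqrt{c_j}\,u_j$ and let $D$ be the $m \times m$ diagonal matrix with entries $\sqrt{c_j}$. Then $UU^t = I_n$ gives $\|U\| = 1$ and $\|U\|_{\mathrm{HS}}^2 = n$, while $\|D\|_{\mathrm{HS}}^2 = \sum_j c_j = n$; the hypothesis $\mathrm{Ker}(D) \subset \mathrm{Ker}(U)$ is automatic once one discards any indices where $c_j = 0$. Applying Theorem~\ref{theoreme-principal} produces $\sigma \subset \{1,\ldots,m\}$ with $|\sigma| \geq (1-\varepsilon)^2 n$ and $s_{\min}(U_\sigma D_\sigma^{-1}) > \varepsilon$. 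Since the columns of $U_\sigma D_\sigma^{-1}$ are exactly the $u_j$ for $j \in \sigma$, this reads
$$\Bigl\|\sum_{j \in \sigma} a_j u_j\Bigr\|_2 \geq \varepsilon \Bigl(\sum_{j \in \sigma} a_j^2\Bigr)^{1/2}$$
for every choice of scalars.

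Finally, I would take $x_j := u_j$ for $j \in \sigma$. The lower bound then follows from the inequality just displayed together with $\|\cdot\|_2 \leq \|\cdot\|_X$ from the John position, while the upper bound is an immediate triangle-inequality estimate using that each contact point satisfies $\|u_j\|_X = 1$. There is no serious obstacle: the proof is essentially a matter of lining up normalizations so that $\|U\|_{\mathrm{HS}}^2/\|D\|_{\mathrm{HS}}^2 = 1$, which is exactly the pair of complementary identities $\mathrm{Tr}(I_n) = n$ and $\sum_j c_j = n$. The only point requiring some conceptual care is the choice of the \emph{minimum}-volume John ellipsoid so that $\|\cdot\|_X$ dominates $\|\cdot\|_2$; the dual choice would yield a factorization of $i_{2,\infty}$ rather than the claimed factorization of $i_{1,2}$.
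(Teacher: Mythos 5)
Your argument coincides with the paper's own proof: both place $B_X$ in John's position with $B_2^n$ as the minimal-volume ellipsoid, take $U$ with columns $\sqrt{c_j}\,u_j$ and $D=\mathrm{diag}(\sqrt{c_j})$ from the John decomposition $\sum c_j u_j u_j^t = I_n$, apply Theorem~\ref{theoreme-principal} (noting $\|U\|=1$, $\|U\|_{\mathrm{HS}}^2=\|D\|_{\mathrm{HS}}^2=n$), and then close with $\|\cdot\|_2\le\|\cdot\|_X$ and the triangle inequality on contact points. Your explicit remark that $\sum_j c_j=n$ (hence $\|D\|_{\mathrm{HS}}^2=\|U\|_{\mathrm{HS}}^2$) just makes visible the normalization the paper uses implicitly, and the comment on the dual John position is a correct aside.
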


\begin{proof}

Without loss of generality, we may assume that $X= (\mathbb{R}^n, \Vert \cdot\Vert_X)$ 
and $B_2^n$ is the ellipsoid of minimal volume 
containing $B_X$. By John's theorem \cite{MR0030135} there 
exist $x_1,...,x_m$ contact points of $B_X$ with $B_2^n$ ($\Vert x_j\Vert_X
= \Vert x_j\Vert_{X^*}=\Vert x_j\Vert_2=1$) and positive scalars $c_1,...,c_m$ such that 
$$
Id=\displaystyle \sum_{j\leqslant m}c_j x_jx_j^t 
$$

Let $U=\left( \sqrt{c_1} x_1,...,\sqrt{c_m}x_m\right)$ be the $n\times m$ rectangular 
matrix whose columns are $\sqrt{c_j}x_j$ and denote $D=diag(\sqrt{c_1},...,\sqrt{c_m})$ 
the $m\times m$ diagonal matrix with $\sqrt{c_j}$ on its diagonal. It would be helpful to observe that 
$
UU^t=Id
$, thus $\Vert U\Vert =1$ and $\Vert U\Vert_{\rm HS}^2=n$.

Let $\varepsilon <1$, applying Theorem~\ref{theoreme-principal} to $U$ and $D$, 
we find $\sigma \subset \{1,...,m\}$ such that
$$
k=\vert \sigma\vert \geqslant \left[(1-\varepsilon)^2n\right]
$$
and for all $a=(a_j)_{j\leqslant m}$
\begin{equation}\label{inverse-norm-of-T}
\left\Vert U_{\sigma}D_{\sigma}^{-1} a\right\Vert_2= 
\left\Vert \sum_{j\in\sigma} a_j x_j \right\Vert_2 \geqslant \varepsilon\left(\sum_{j\in\sigma} 
\vert a_j\vert^2\right)^{\frac{1}{2}}
\end{equation}

Since $\Vert\cdot\Vert_2\leqslant \Vert\cdot\Vert_X$ and using the triangle inequality, we have 
$$
\varepsilon\left(\sum_{j\in\sigma} 
\vert a_j\vert^2\right)^{\frac{1}{2}}\leqslant 
\left\Vert \sum_{j\in\sigma} a_j x_j \right\Vert_2 \leqslant 
\left\Vert \sum_{j\in\sigma} a_j x_j \right\Vert_X \leqslant 
\sum_{j\in \sigma} \vert a_j\vert
$$

\end{proof}

Let $\mathbb{BM}_n$ denote the space of all $n$-dimensional normed
spaces $X$, known as the Banach-Mazur compactum. If $X,Y$ are in
$\mathbb{BM}_n$, the Banach-Mazur distance between $X$ and
$Y$ is defined as follows:
$$
d(X,Y)=  {\inf} \{ \Vert T\Vert \cdot \Vert T^{-1}\Vert \mid \ T \text{ is an isomorphism between $X$ and $Y$ }\}
$$

\begin{Rq}
For $K, L$ two symmetric convex bodies in $\mathbb{R}^n$, the Banach-Mazur distance 
between $K$ and $L$ is given by 
$$
d(K,L)=  {\inf}\left\{ \alpha/\beta\mid \ \beta L\subset T(K) \subset \alpha L\right\}
$$
One can easily check that this distance is coherent with the previous one as $d(X, Y) = d(B_X , B_Y)$.
\end{Rq}

As a direct application of the previous result, we have
\begin{coro}
Let $X$ be an $n$-dimensional Banach space. For any $\varepsilon \in (0,1)$,
there exists $Y$ a subspace of $X$ of dimension $k\geqslant \left[(1-\varepsilon)^2n\right]$ such 
that $d(Y,l_1^k)\leqslant \frac{\sqrt{n}}{\varepsilon}$.
\end{coro}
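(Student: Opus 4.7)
The plan is to apply Theorem~\ref{dvoretzky-rogers} directly and take $Y$ to be the span of the vectors it produces. Given $\varepsilon \in (0,1)$, the theorem provides vectors $x_1,\ldots,x_k \in X$ with $k \geqslant [(1-\varepsilon)^2 n]$ such that
\[
\varepsilon \Bigl(\sum_{j\leqslant k} a_j^2\Bigr)^{1/2} \leqslant \Bigl\Vert \sum_{j\leqslant k} a_j x_j \Bigr\Vert_X \leqslant \sum_{j\leqslant k} \vert a_j\vert
\]
for all scalars $(a_j)_{j\leqslant k}$. The lower bound immediately forces the $x_j$'s to be linearly independent, so $Y := \mathrm{span}\{x_1,\ldots,x_k\}$ is a $k$-dimensional subspace of $X$.

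Next, I would define the linear isomorphism $T : \ell_1^k \longrightarrow Y$ by $T(a) = \sum_{j\leqslant k} a_j x_j$. The right-hand inequality above is exactly the statement $\Vert T a\Vert_X \leqslant \Vert a\Vert_1$, so $\Vert T\Vert \leqslant 1$. For the inverse, combine the left-hand inequality with the standard comparison $\Vert a\Vert_2 \geqslant \Vert a\Vert_1/\sqrt{k}$ to obtain
\[
\Vert T a\Vert_X \;\geqslant\; \varepsilon\, \Vert a\Vert_2 \;\geqslant\; \frac{\varepsilon}{\sqrt{k}}\, \Vert a\Vert_1,
\]
which yields $\Vert T^{-1}\Vert \leqslant \sqrt{k}/\varepsilon$.

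Finally, multiplying the two estimates,
\[
d(Y, \ell_1^k) \;\leqslant\; \Vert T\Vert \cdot \Vert T^{-1}\Vert \;\leqslant\; \frac{\sqrt{k}}{\varepsilon} \;\leqslant\; \frac{\sqrt{n}}{\varepsilon},
\]
which is the desired bound. There is essentially no obstacle here: the corollary is a direct translation of Theorem~\ref{dvoretzky-rogers} into the language of Banach-Mazur distance, with the $\sqrt{n}$ factor arising solely from the passage between the $\ell_2$ norm (in which the lower bound of the factorization is naturally stated) and the $\ell_1$ norm.
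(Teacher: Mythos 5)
Your proof is correct and is exactly the intended ``direct application'' of Theorem~\ref{dvoretzky-rogers} that the paper invokes without writing out: take $Y$ to be the span of the $x_j$, estimate $\Vert T\Vert$ and $\Vert T^{-1}\Vert$ from the two-sided inequality, and use $\Vert a\Vert_1\leqslant\sqrt{k}\,\Vert a\Vert_2$ to convert the $\ell_2$ lower bound into an $\ell_1$ one. Nothing is missing.
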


\subsection{The nonsymmetric case} 

Let us now turn to the nonsymmetric version of Theorem~\ref{dvoretzky-rogers}. 
We will prove the following:

\begin{theo}
Let $K\subset \mathbb{R}^n$ be a convex body, such that $B_2^n$ is 
the ellipsoid of minimal volume containing $K$.
$\forall \varepsilon \in (0,1)$, there exist $x_1,...,x_k$ with $k\geqslant \left[(1-\varepsilon)n\right]$ 
contact points and there exists $P$ an orthogonal projection of rank $\geqslant k$ 
such that for all $(a_j)_{j\leqslant k}$
$$\frac{\varepsilon^2}{16} \left( \sum_{j=1}^k \vert a_j\vert^2\right)^{\frac{1}{2}}
\leqslant \left\Vert \sum_{j=1}^k a_j Px_j\right\Vert_{PK}
\leqslant \frac{4}{\varepsilon} \sum_{j=1}^k \vert a_j\vert$$ 
\end{theo}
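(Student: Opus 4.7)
The plan is to follow the scheme of the symmetric case (Theorem~\ref{dvoretzky-rogers}) but with the nonsymmetric John theorem in place of the symmetric one, and to combine this with tools from \cite{MR947820} and \cite{MR1796719} to control the asymmetry that arises. The nonsymmetric John theorem, applicable since $B_2^n$ is the ellipsoid of minimal volume containing $K$, provides contact points $x_1,\ldots,x_m$ of $K$ with $B_2^n$ and positive scalars $c_j$ satisfying $\sum_j c_j x_j x_j^t = I$ and, crucially, the centering condition $\sum_j c_j x_j = 0$; taking trace gives $\sum_j c_j = n$.

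I would then perform a heavy/light split of the contact points: fix a threshold $\beta$ of order $1/\varepsilon$, set $J_1 = \{j : c_j > \beta\}$ (so that $|J_1| \leq n/\beta$), and let $P$ denote the orthogonal projection onto $V^\perp$ where $V = \mathrm{span}\{x_j : j \in J_1\}$. Then $\mathrm{rank}(P) \geq n - n/\beta$ and $Px_j = 0$ for $j \in J_1$. Applying Theorem~\ref{theoreme-principal} to the matrix $U = (\sqrt{c_j}\,Px_j)_{j \in J_2}$ together with $D = \mathrm{diag}(\sqrt{c_j})_{j \in J_2}$ (the kernel condition being trivial since the $c_j$ are strictly positive on $J_2$), one computes
$$UU^t \;=\; P\Bigl(I - \sum_{j \in J_1} c_j x_j x_j^t\Bigr)P \;=\; P,$$
whence $\|U\| = 1$, $\|U\|_{\mathrm{HS}}^2 = \mathrm{rank}(P)$, and $\|D\|_{\mathrm{HS}}^2 = \sum_{j \in J_2} c_j \leq n$. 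This produces a subset $\sigma \subset J_2$ with $|\sigma| \geq \bigl[(1-\varepsilon')^2\,\mathrm{rank}(P)\bigr]$ and
$$\Bigl\| \sum_{j \in \sigma} a_j P x_j \Bigr\|_2 \;\geq\; \varepsilon' \sqrt{\mathrm{rank}(P)\big/\sum_{j \in J_2} c_j}\;\|a\|_2.$$
Balancing the parameters $\varepsilon' \sim \varepsilon$ and $\beta \sim 1/\varepsilon$ yields $|\sigma| \geq [(1-\varepsilon)n]$; combined with $PK \subset B_2^{\mathrm{rank}(P)}$ (which gives $\|\cdot\|_{PK} \geq \|\cdot\|_2$ on $\mathrm{Im}(P)$), the lower bound $\frac{\varepsilon^2}{16}(\sum a_j^2)^{1/2} \leq \|\sum a_j P x_j\|_{PK}$ follows after optimizing constants.

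The main obstacle is the upper bound $\|\sum a_j P x_j\|_{PK} \leq \frac{4}{\varepsilon} \sum |a_j|$. Since $\|Px_j\|_{PK} \leq 1$ trivially, splitting $a_j$ into positive and negative parts reduces this to controlling $\|{-Px_j}\|_{PK} \leq 4/\varepsilon$ for $j \in \sigma$, i.e.\ to bounding the symmetry constant of $PK$. The centering identity $\sum_{j \in J_2} c_j P x_j = 0$ (inherited from the John identity after applying $P$) yields the naive estimate $\|{-Px_i}\|_{PK} \leq (\sum_{j \in J_2} c_j - c_i)/c_i$, which is insufficient on its own because the weight $c_i$ for $i \in J_2$ can be arbitrarily small. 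This is precisely where I would invoke the machinery of Bourgain--Szarek \cite{MR947820} and Litvak--Tomczak-Jaegermann \cite{MR1796719}: the heavy-index projection $P$ and the contact-point identities are used in concert to bound the symmetry constant of $PK$ by $4/\varepsilon$, from which the stated upper bound follows via the triangle inequality and the positive homogeneity of the Minkowski functional. The improvement of one factor of $\varepsilon$ over Theorem~D stems precisely from using Theorem~\ref{theoreme-principal} in place of the Bourgain--Tzafriri restricted invertibility principle that was the bottleneck in the original argument.
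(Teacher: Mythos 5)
Your computation that $UU^t = P$ after the heavy/light split is correct, and the lower bound you derive from Theorem~\ref{theoreme-principal} would go through. But the heart of this theorem --- and the step you explicitly flag as ``the main obstacle'' --- is the upper bound $\left\Vert -Px_j\right\Vert_{PK}\leqslant 4/\varepsilon$, and on that step your proposal has a genuine gap: you correctly observe that the naive estimate $\left\Vert -Px_i\right\Vert_{PK}\leqslant (\sum_j c_j)/c_i$ blows up for small $c_i$, and that your heavy/light split (which discards the \emph{large} $c_j$) does nothing to repair this, but you then simply assert that ``the machinery of Bourgain--Szarek and Litvak--Tomczak-Jaegermann'' closes the gap without saying what that machinery is or how it applies. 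That is precisely the missing idea, and without it the proof is not a proof. In fact the centering condition $\sum_j c_j x_j=0$ of nonsymmetric John, which your heavy/light argument is built around, is never used in the paper's proof at all.

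The paper's actual mechanism for the upper bound is quite different and is where the real work lies. One first applies Theorem~\ref{theoreme-principal} to $U=(\sqrt{c_j}x_j)$ with parameter $\varepsilon/4$ to extract $\sigma_1$ of size $s\geqslant(1-\varepsilon/2)n$ on which the $x_j$ are well separated in $\ell_2$. Then $\sigma_1$ is partitioned into roughly $\frac{\varepsilon}{2}s$ groups $A_l$, each of cardinality at most $\left[4/\varepsilon\right]+1$, and $P$ is taken to be the orthogonal projection onto $\mathrm{span}\{z_l\}^\perp$ where $z_l=\sum_{i\in A_l}x_i$ are the \emph{unweighted} group sums. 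The constraint $Pz_l=0$ forces $-Px_j=\sum_{i\in A_l,\,i\neq j}Px_i$, a sum of at most $\left\vert A_l\right\vert-1\leqslant 4/\varepsilon$ points of $PK$, whence $-Px_j\in\frac{4}{\varepsilon}PK$ by convexity. The rank loss from these $\sim\frac{\varepsilon}{2}s$ constraints is acceptable. Finally, because the projection can degrade linear independence, Theorem~\ref{theoreme-principal} is applied a \emph{second} time, to the columns $P''e_j$ (where $P''$ is the associated coordinate-space projection), and composing the two restricted-invertibility estimates through $\left\Vert T^{-1}\right\Vert\leqslant 4/\varepsilon$ yields the $\varepsilon^2/16$ in the lower bound. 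Your single application of Theorem~\ref{theoreme-principal}, while algebraically clean, does not account for the interaction with the grouping projection, and your projection $P$ (annihilating heavy columns) is aimed at the wrong target. To complete your argument you would have to replace the heavy/light split with the grouping construction, at which point you would essentially be reproducing the paper's proof.
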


\begin{proof}
By John's Theorem \cite{MR0030135}, we get an identity decomposition in $\mathbb{R}^n$
$$
Id= \sum_{j=1}^m  c_j x_jx_j^t 
$$
where $x_1,...,x_m$ are contact points of $K$ and $B_2^n$ and $(c_j)_{j\leqslant m}$ 
positive scalars. 

Similarly to the proof of Theorem~\ref{dvoretzky-rogers}, we find $\sigma_1\subset \{1,...,m\}$ such that
$$s=\vert \sigma_1\vert \geqslant \left(1-\frac{\varepsilon}{4}\right)^2n \geqslant (1-\frac{\varepsilon}{2})n$$
and for all $a=(a_j)_{j\leqslant m}$
\begin{equation}\label{inverse norm of T}
\left\Vert U_{\sigma_1}D_{\sigma_1}^{-1} a\right\Vert_2= 
\left\Vert \sum_{j\in\sigma_1} a_j x_j \right\Vert_2 \geqslant \frac{\varepsilon}{4} \left(\sum_{j\in\sigma_1} 
\vert a_j\vert^2\right)^{\frac{1}{2}}
\end{equation}

Define $Y={\rm span}\{x_j\}_{j\in \sigma_1}$. We will now use the argument of Litvak and Tomczak-Jaegermann \cite{MR1796719} to construct 
the projection $P$. 
First partition $\sigma_1$ into $\left[\frac{\varepsilon}{2}s\right]$ disjoint subsets $A_l$ of equal size.
Clearly $$\left\vert A_l\right\vert \leqslant \left[\frac{s}{[\frac{\varepsilon}{2}s]}\right] +1 
\leqslant \left[\frac{2}{\varepsilon}\cdot\frac{\frac{\varepsilon}{2}s}{[\frac{\varepsilon}{2}s]}\right] +1
\leqslant\left[\frac{4}{\varepsilon}\right] +1$$

Let $z_l= \sum_{i\in A_l}x_i$ and take $P :Y\longrightarrow Y$ the orthogonal projection onto ${\rm span}\{z_l\}^{\bot}$. 
For every $l$, we have $Pz_l=0$ so that for $j\in A_l$ we can write

$$-Px_j= \sum_{i\in A_l, i\neq j} Px_i = \left(\left\vert A_l\right\vert -1\right) \cdot \frac{1}{\left\vert A_l\right\vert-1} \sum_{i\in A_l, i\neq j} Px_i$$
We deduce that for every $l$ and every $j\in A_l$, we have 
\begin{equation}\label{eq-nonsym-DR}
-Px_j \in \left(\left\vert A_l\right\vert -1\right) PK\subset \frac{4}{\varepsilon} PK
\end{equation}

Let $T:\mathbb{R}^{\vert \sigma_1\vert}\longrightarrow Y$ a linear operator defined by $Te_j= x_j$ for all $j\in\sigma_1$, 
where $(e_j)_{j\in\sigma_1}$ denotes the canonical basis of $\mathbb{R}^{\vert \sigma_1\vert}$ and $Y$ is 
equipped with the euclidean norm. 
Since $(x_j)_{j\leqslant s}$ are 
linearly independent, $T$ is an isomorphism. Moreover, by (\ref{inverse norm of T}), we have 
$\Vert T^{-1}\Vert\leqslant \frac{4}{\varepsilon}$. 
Take $P'= T^{-1}PT$ and $P''$ the orthogonal projection onto $\left({\rm Ker } P'\right)^{\perp}$. It is easy 
to check that $P''P'=P''$ and 
$$k={\rm rank }P'' ={\rm rank }P \geqslant \left(1-\frac{\varepsilon}{2}\right) s \geqslant \left(1-\varepsilon\right)n$$
 For all scalars $(a_j)_{j\in \sigma_1}$, 
 
 \begin{align*}
\left\Vert \sum_{j\in\sigma_1} a_j Px_j\right\Vert_2 
&= \left\Vert \sum_{j\in\sigma_1} a_jPTe_j\right\Vert_2\\
&=\left\Vert \sum_{j\in\sigma_1} T\left(a_jP'e_j\right)\right\Vert_2\\
&\geqslant \frac{1}{\Vert T^{-1}\Vert}\cdot \left\Vert \sum_{j\in\sigma_1} a_j P'e_j\right\Vert_2\\
&\geqslant \frac{\varepsilon}{4} \cdot \left\Vert \sum_{j\in\sigma_1} a_j P''e_j\right\Vert_2
 \end{align*}
 Now take $U=\left( P''e_1,...,P''e_s\right)$ the $s\times s$ matrix whose columns are $(P''e_j)$. 
 Apply Theorem~\ref{theoreme-principal} with $U$ and $Id$ as diagonal matrix and $\frac{\varepsilon}{4}$ 
 as parameter, then there exists $\sigma \subset \sigma_1$ of size

$$\vert \sigma\vert \geqslant \left(1-\frac{\varepsilon}{4}\right)^2 s \geqslant (1-\varepsilon)n$$
such that for all scalars $(a_j)_{j\in \sigma}$, 

$$
\left\Vert \sum_{j\in\sigma} a_jP''e_j\right\Vert_2 \geqslant \frac{\varepsilon}{4} 
\left(\sum_{j\in\sigma}\vert a_j\vert^2\right)^{\frac{1}{2}}
$$
This gives us the following

$$
\left\Vert \sum_{j\in\sigma} a_jPx_j\right\Vert_2\geqslant 
\frac{\varepsilon}{4}\cdot \left\Vert \sum_{j\in\sigma} a_jP''e_j\right\Vert_2 
\geqslant \frac{\varepsilon^2}{16} \left(\sum_{j\in\sigma}\vert a_j\vert^2\right)^{\frac{1}{2}}
$$
On the other hand, since $K\subset B_2^n$ we have $PK\subset B_2^k$ and therefore 

$$
\left\Vert \sum_{j\in\sigma} a_jPx_j\right\Vert_2 \leqslant \left\Vert \sum_{j\in\sigma} a_jPx_j\right\Vert_{PK}
$$

Denoting $A=-PK\cap PK$ which is a centrally symmetric convex body and using 
(\ref{eq-nonsym-DR}) alongside the triangle inequality, one can write 

$$
\left\Vert \sum_{j\in\sigma} a_jPx_j\right\Vert_{A}\leqslant \frac{4}{\varepsilon} \sum_{j\in\sigma}\vert a_j\vert
$$

Finally, we have

$$\frac{\varepsilon^2}{16} \left(\sum_{j\in\sigma}\vert a_j\vert^2\right)^{\frac{1}{2}}
\leqslant \left\Vert \sum_{j\in\sigma} a_jPx_j\right\Vert_2
\leqslant \left\Vert \sum_{j\in\sigma} a_jPx_j\right\Vert_{PK}
\leqslant \left\Vert \sum_{j\in\sigma} a_jPx_j\right\Vert_{A}\leqslant \frac{4}{\varepsilon} \sum_{j\in\sigma}\vert a_j\vert$$

\end{proof}

One can interpret the previous result geometrically as follows:
\begin{coro}
Let $K\subset \mathbb{R}^n$ be a convex body such that $B_2^n$ is 
the ellipsoid of minimal volume containing $K$.
For any $\varepsilon \in (0,1)$, there exists $P$ an orthogonal projection of rank 
$k\geqslant \left[(1-\varepsilon)n\right]$ such that 
$$
\frac{\varepsilon}{4} B_1^k\subset PK\subset \frac{16}{\varepsilon^2} B_2^k.
$$
Moreover, $d(PK,B_1^k)\leqslant \frac{64\sqrt{n}}{\varepsilon^3}$.\\

By duality, this means that there exists a subspace $E\subset \mathbb{R}^n$ 
of dimension $k\geqslant \left[(1-\varepsilon)n\right]$ such that
$$
\frac{\varepsilon^2}{16} B_2^k \subset K\cap E\subset \frac{4}{\varepsilon} B_\infty^k.
$$
Moreover, $d(K\cap E,B_\infty^k)\leqslant \frac{64\sqrt{n}}{\varepsilon^3}$.
\end{coro}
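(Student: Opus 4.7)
The plan is to read the two inclusions directly off the analytic inequalities of the preceding theorem, then chain them with the elementary comparison $B_2^k\subset\sqrt k\,B_1^k$ to get the Banach--Mazur distance estimate, and finally obtain the dual formulation by polarity.

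I would first apply the preceding theorem, which produces the orthogonal projection $P$ of rank $k\geqslant\left[(1-\varepsilon)n\right]$ together with $k$ contact points $(x_j)$ for which
\[
\tfrac{\varepsilon^2}{16}\|a\|_2\leqslant\Bigl\|\sum_{j}a_j Px_j\Bigr\|_{PK}\leqslant\tfrac{4}{\varepsilon}\|a\|_1.
\]
The lower bound forces $(Px_j)$ to be linearly independent, so the map $T\colon\mathbb{R}^k\to P\mathbb{R}^n$ sending $e_j$ to $Px_j$ is a linear isomorphism through which I identify $P\mathbb{R}^n$ with $\mathbb{R}^k$. Under this identification, the upper estimate is equivalent to $T(B_1^k)\subset\tfrac{4}{\varepsilon}PK$, i.e.\ $\tfrac{\varepsilon}{4}B_1^k\subset PK$, and the lower estimate is equivalent to $PK\subset\tfrac{16}{\varepsilon^2}T(B_2^k)$, i.e.\ $PK\subset\tfrac{16}{\varepsilon^2}B_2^k$. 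The distance bound is then immediate from the chain
\[
\tfrac{\varepsilon}{4}B_1^k\subset PK\subset\tfrac{16}{\varepsilon^2}B_2^k\subset\tfrac{16\sqrt k}{\varepsilon^2}B_1^k,
\]
which yields $d(PK,B_1^k)\leqslant 64\sqrt k/\varepsilon^3\leqslant 64\sqrt n/\varepsilon^3$.

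For the dual statement, I would apply the first part to the polar body $K^\circ$ (if $B_2^n$ is the maximum-volume ellipsoid inscribed in the convex body under consideration, then $B_2^n$ is the minimum-volume ellipsoid containing its polar, so the first part applies), and then polarise within $E=P\mathbb{R}^n$. Using the standard identity $(PL)^\circ=L^\circ\cap E$ for convex bodies $L$ containing the origin, together with $(B_1^k)^\circ=B_\infty^k$ and $(B_2^k)^\circ=B_2^k$, the inclusions transform into $\tfrac{\varepsilon^2}{16}B_2^k\subset K\cap E\subset\tfrac{4}{\varepsilon}B_\infty^k$, and the same chaining produces $d(K\cap E,B_\infty^k)\leqslant 64\sqrt n/\varepsilon^3$. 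There is no substantial obstacle; the only points of care are the identification $T$ (a linear but not isometric isomorphism, so $B_1^k$ and $B_2^k$ are understood up to this identification) and the minor shift of normalisation between the primal and the dual statement.
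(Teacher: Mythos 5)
Your argument is correct and is exactly what the paper leaves implicit (the corollary is stated without proof): one reads the inclusions $\tfrac{\varepsilon}{4}B_1^k\subset PK\subset\tfrac{16}{\varepsilon^2}B_2^k$ off the analytic inequalities of the preceding theorem via the linear identification $T\colon e_j\mapsto Px_j$, chains with $B_2^k\subset\sqrt{k}\,B_1^k$ for the distance bound, and polarizes inside $E=P\mathbb{R}^n$ using $(PL)^\circ\cap E=L^\circ\cap E$ for the dual form. You were also right to flag the two caveats: the balls $B_1^k,B_2^k$ are understood in the $T$-coordinates (since $T$ is not an isometry), and the dual statement really concerns a body normalized so that $B_2^n$ is the maximal inscribed ellipsoid (equivalently, it is a statement about $K^\circ$); the paper's phrasing of the hypothesis for the dual part is loose on this last point, and your reading is the correct one.
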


\subsection{Estimate of the Banach-Mazur distance to the Cube}

In \cite{MR947820}, Bourgain-Szarek showed how 
to estimate the Banach-Mazur distance to the cube once 
a proportional Dvoretzky-Rogers factorization is proven. This technique 
was again used in \cite{MR1353450} and \cite{MR1008718}. Since we 
are able to obtain a proportional Dvoretzky-Rogers factorization 
with a better constant, using the same argument we will recover 
the best known asymptotic for the Banach-Mazur distance to the cube 
and improve the constants involved. Let us start defining 
$$
R_\infty^n=\max\left\{ d(X,l_\infty^n)\ \vert\  X\in\mathbb{BM}_n\right\}
$$
Similarly one can define $R_1^n$, and since the Banach-Mazur distance 
is invariant by duality then $R_1^n=R_\infty^n$. It follows from John's theorem 
\cite{MR0030135} that the diameter of $\mathbb{BM}_n$ is less than 
$n$ and therefore a trivial estimate 
is $R_\infty^n\leqslant n$. In \cite{MR1081810}, Szarek showed 
the existence of an $n$-dimensional Banach space $X$ such that 
$d(X,l_\infty^n)\geqslant c\sqrt{n} \log(n)$. Bourgain-Szarek proved 
in \cite{MR947820} that $R_\infty^n\leqslant o(n)$ while Szarek-Talagrand \cite{MR1008718} 
and Giannopoulos \cite{MR1353450} improved this upper bound to $cn^{\frac{7}{8}}$ and 
$cn^\frac{5}{6}$ respectively. Here, we will prove the following estimate:

\begin{theo}\label{distance-au-cube}
Let $X$ be an $n$-dimensional Banach space. 
Then $$
d(X,l_1^n)\leqslant 2^{\frac{4}{3}}\sqrt{n} \cdot d(X,l_2^n)^{\frac{2}{3}}.
$$
\end{theo}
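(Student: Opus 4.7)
The plan is to apply the proportional Dvoretzky-Rogers factorization (Theorem~\ref{dvoretzky-rogers}) with a parameter $\varepsilon$ to be optimized in terms of $D:=d(X,\ell_2^n)$, and to construct an explicit isomorphism $T:\ell_1^n\to X$ by completing the Dvoretzky-Rogers vectors with a suitable basis of the orthogonal complement.

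Place $B_X$ in L\"owner position, so that Theorem~\ref{dvoretzky-rogers} applies: $\|\cdot\|_2\le\|\cdot\|_X$, and there are contact points $x_1,\dots,x_k$ with $k\ge\lfloor(1-\varepsilon)^2 n\rfloor$, $\|x_j\|_X=\|x_j\|_2=1$, satisfying $\varepsilon\|a\|_2\le\|\sum a_j x_j\|_X\le\|a\|_1$. From the intermediate step of its proof one extracts the sharper Euclidean lower bound $\|\sum a_j x_j\|_2\ge\varepsilon\|a\|_2$, i.e. the matrix $M=[x_1|\cdots|x_k]$ has $s_{\min}(M)\ge\varepsilon$. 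Let $Y=\mathrm{span}(x_j)$ and $Z=Y^\perp$ in $\|\cdot\|_2$, and take $y_{k+1},\dots,y_n$ an orthonormal basis of $Z$, chosen to be aligned with an ellipsoid realising $d(X,\ell_2^n)=D$ so that $\|y_j\|_X\le D$ rather than the generic bound $\sqrt n$ coming from John's theorem. Set $Te_j=x_j$ for $j\le k$ and $Te_j=\lambda y_j$ for $j>k$ with $\lambda=1/D$, so that $\|T\|_{\ell_1^n\to X}\le 1$.

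For $\|T^{-1}\|$, decompose $x=Px+(I-P)x$ with $P$ the $\|\cdot\|_2$-orthogonal projection onto $Y$. Writing $Px=\sum a_j x_j$, the lower bound on $M$ gives $\|a\|_2\le\|Px\|_2/\varepsilon\le\|x\|_X/\varepsilon$ and so $\sum_{j\le k}|a_j|\le\sqrt k\,\|a\|_2\le(\sqrt k/\varepsilon)\|x\|_X$. Writing $(I-P)x=\sum b_j y_j$, the orthonormality of the $y_j$'s yields $\|b\|_2=\|(I-P)x\|_2\le\|x\|_X$ and hence $\sum_{j>k}|b_j|\le\sqrt{n-k}\,\|x\|_X$. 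Therefore, using $\sqrt k\le\sqrt n$ and $n-k\le 2\varepsilon n$,
\[
d(X,\ell_1^n)\le\|T\|\,\|T^{-1}\|\le\frac{\sqrt k}{\varepsilon}+\frac{\sqrt{n-k}}{\lambda}\le\frac{\sqrt n}{\varepsilon}+D\sqrt{2\varepsilon n}.
\]
Equalising the two terms gives $\varepsilon^{3/2}=1/(D\sqrt 2)$, i.e. $\varepsilon=2^{-1/3}D^{-2/3}$, at which each term equals $2^{1/3}\sqrt n\,D^{2/3}$; their sum is the announced estimate $d(X,\ell_1^n)\le 2^{4/3}\sqrt n\,D^{2/3}$. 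Note that plugging $D\le\sqrt n$ recovers the Giannopoulos-type bound $\mathcal{O}(n^{5/6})$.

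The main obstacle is the simultaneous use of two Euclidean structures: the L\"owner ellipsoid required by Theorem~\ref{dvoretzky-rogers}, and the ellipsoid witnessing $d(X,\ell_2^n)=D$ needed to control $\|y_j\|_X$ by $D$ rather than $\sqrt n$. These two ellipsoids need not coincide, and constructing a basis $y_{k+1},\dots,y_n$ of $Z$ which is orthonormal in one structure while retaining small $X$-norm from the other is the technically delicate point; otherwise the complement contribution inflates from $D\sqrt{n-k}$ to $\sqrt n\sqrt{n-k}$, and the optimisation only yields $n^{5/6}$ instead of $\sqrt n\,D^{2/3}$. Once that alignment is carried out, the remaining computation is the routine AM-GM-type balancing described above.
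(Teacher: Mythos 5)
Your plan follows the same general architecture as the paper's proof (Dvoretzky--Rogers factorization for the first $k$ coordinates, a complementary basis for the remaining $n-k$, and a balance of $\varepsilon$ against $d_X$), and the arithmetic you carry out at the end --- equalizing $\sqrt n/\varepsilon$ and $D\sqrt{2\varepsilon n}$ to get $\varepsilon = 2^{-1/3}D^{-2/3}$ and the bound $2^{4/3}\sqrt n\,D^{2/3}$ --- is exactly the right optimization. But the argument has a genuine gap precisely at the step you flag and then wave past: you need vectors $y_{k+1},\dots,y_n$ spanning $Y^{\perp}$ that are simultaneously \emph{orthonormal in the L\"owner $\ell_2$-structure} (you use $\|b\|_2 = \|(I-P)x\|_2 \leqslant \|x\|_X$ in the estimate of $\|T^{-1}\|$) \emph{and} have $\|y_j\|_X \leqslant D$ (you use this to make $\|T\|\leqslant 1$). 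A single vector cannot in general satisfy both: in L\"owner position the only uniform bound on $\ell_2$-unit vectors is $\|y\|_X \leqslant \sqrt n$, and the vectors of $X$-norm at most $D$ are the unit vectors of the \emph{distance} ellipsoid $\mathcal E$, which is a different quadratic form. There is no reason for $Y^{\perp}$ (taken in the $\ell_2$-sense) to contain an $\ell_2$-orthonormal system lying in $D\cdot B_X$. If you instead work with $\mathcal E$-orthonormal $y_j$ to get $\|y_j\|_X\leqslant D$, you lose the $\ell_2$-Pythagorean splitting and the comparison to the DR lower bound $\|\sum a_j x_j\|_2 \geqslant \varepsilon\|a\|_2$ now costs an extra factor of $D$, collapsing the argument to the trivial $n^{5/6}$ bound.

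The device that resolves this is the interpolated ellipsoid, and it is the real content of the proof, not a ``technically delicate point'' that can be outsourced. Writing $\mathcal E = \{x : \sum \alpha_i^2\langle x,v_i\rangle^2 \leqslant 1\}$ for the distance ellipsoid and recalling that in L\"owner position $B_2^n = \{x: \sum\langle x,v_i\rangle^2\leqslant 1\}$ in the same eigenbasis, the paper averages the two quadratic forms:
\[
\mathcal E_1 = \left\{ x : \sum_{i} \tfrac12(1+\alpha_i^2)\langle x,v_i\rangle^2 \leqslant 1\right\}.
\]
Then $B_2^n\cap\mathcal E \subset \mathcal E_1 \subset \sqrt 2\,B_2^n\cap\mathcal E$, from which two facts follow simultaneously: $\tfrac{1}{\sqrt2 d_X}\mathcal E_1 \subset B_X \subset \mathcal E_1$ (so an $\mathcal E_1$-orthonormal system scaled to $\mathcal E_1$-norm $\tfrac{1}{\sqrt2 d_X}$ has $X$-norm at most $1$, playing the role of your $\lambda y_j$), and $\|\cdot\|_2 \leqslant \sqrt 2\,\|\cdot\|_{\mathcal E_1}$ (so the DR lower bound in $\ell_2$ transfers to the $\mathcal E_1$-Pythagorean decomposition at the cost of $\sqrt 2$). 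Both orthogonality and the $X$-norm control then live in the \emph{single} Euclidean structure $\mathcal E_1$, and the rest of the computation is the routine balancing you described. Without this (or an equivalent reconciliation of the two ellipsoids) the bound you claim for $\|T\|$ is unjustified, so the proof as written does not establish the theorem.
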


\begin{proof}[proof]
We denote $d_X=d(X,l_2^n)$. In order to bound $d(X,l_1^n)$, we need to define an isomorphism 
$T: l_1^n\longrightarrow X$ and estimate $\Vert T\Vert\cdot\Vert T^{-1}\Vert$. 
A natural way is to find a basis of $X$ and then define $T$ the operator which 
sends the canonical basis of $\mathbb{R}^n$ to this basis of $X$. The main 
idea is to find a "large" subspace $Y$ of $X$ which is "not too far" from $l_1$ 
(actually more is needed), then complement the basis of $Y$ to obtain a basis 
of $X$. Finding the "large" subspace is the heart of the method and is 
basically given by the proportional Dvoretzky-Rogers factorization. 
The proof is mainly divided in four steps:
\vskip 0.3cm
\textbf{-First step}: Place $B_X$ into a "good" position and choose the right euclidean structure.\\
Since the Banach-Mazur distance is invariant under linear transformation, 
we may change the position of $B_X$. Therefore without loss of 
generality we may assume that $X=(\mathbb{R}^n, \Vert \cdot\Vert_X)$ and 
$B_2^n$ is the ellipsoid of minimal volume containing $B_X$. Denote 
also $\mathcal{E}$ the distance ellipsoid i.e 
\begin{equation}\label{eq-position1}
\frac{1}{d_X}\mathcal{E}\subset B_X\subset \mathcal{E}
\end{equation}
The ellipsoid $\mathcal{E}$ can be defined as 
$$
\mathcal{E}=\left\{ x\in\mathbb{R}^n/\ \sum_{j=1}^n \alpha_i^2 \langle x,v_j\rangle^2 \leqslant 1\right\},
$$
where $v_j$ is an orthonormal basis (in the standard sense) of $\mathbb{R}^n$ and $\alpha_j$ positive scalars. 
To take into consideration the two euclidean structures, we will define the following ellipsoid 
$$
\mathcal{E}_1=\left\{ x\in\mathbb{R}^n/\ \sum_{j=1}^n \frac{1}{2}\left(1+\alpha_i^2\right) \langle x,v_j\rangle^2 \leqslant 1\right\}.
$$
It is easy to check that 
\begin{equation}\label{eq-position2}
B_2^n\cap\mathcal{E}\subset \mathcal{E}_1\subset \sqrt{2}B_2^n\cap\mathcal{E}
\end{equation}

Therefore
\begin{equation}\label{eq-position3}
\frac{1}{\sqrt{2} d_X} \mathcal{E}_1\subset B_X\subset \mathcal{E}_1
\end{equation}
 
\textbf{-Second step}: Let $\varepsilon >0$ and set $k= (1-2\varepsilon)n$. 
Similarly to the proof of Theorem~\ref{dvoretzky-rogers}, we find 
$x_1,...,x_k$ in $X$ such that for all scalars $(a_j)_{j\leqslant k}$
\begin{equation}\label{eq-DR}
\varepsilon \left(\sum_{j\leqslant k} a_j^2\right)^{\frac{1}{2}}\leqslant 
\left\Vert \sum_{j\leqslant k} a_jx_j\right\Vert_{2}\leqslant 
\left\Vert \sum_{j\leqslant k} a_jx_j\right\Vert_{X}
\leqslant  \sum_{j\leqslant k} \vert a_j\vert \\ 
\end{equation}

Note that $(x_j)_{j\leqslant k}$ are linearly independent and are a good candidate 
to be part of the basis of $X$. \vskip 0.3cm 
\textbf{-Third step}: To form a basis of $X$, we simply take $y_{k+1},..,y_n$ 
an orthogonal basis in the $\mathcal{E}_1$-sense of span$\left\{(x_j)_{j\leqslant k}\right\}^{\bot}$ (where the $\bot$ is in the $\mathcal{E}_1$-sense) such that 
$\Vert y_j\Vert_{\mathcal{E}_1} = \frac{1}{\sqrt{2}d_X}$. By (\ref{eq-position3}), we have 
$$
\forall j> k ,\quad \Vert y_j\Vert_X\leqslant 1
$$
\vskip 0.3cm  
\textbf{-Fourth step}: Define $T : l_1^k\longrightarrow X$ by 
$T(e_j)= x_j$ if $j\leqslant k$ and $T(e_j)=y_j$ if $j> k$. 
Let $a=(a_j)_{j\leqslant n} \in \mathbb{R}^n$ and write 
$$
Ta=
\displaystyle \sum_{j=1}^k a_jx_j+
\sum_{j=k+1}^n a_j y_j.
$$ 
\vskip 0.3cm 
Then using the triangle inequality and (\ref{eq-position3}), one can write 
$$
\Vert a\Vert_1 = \displaystyle
\sum_{j\leqslant k} \vert a_j\vert + \sum_{j> k } \vert a_j\vert
               \geqslant \left\Vert \sum_{j\leqslant k } a_j x_j+ \sum_{j>k} a_j y_j\right\Vert_X
               \geqslant \left\Vert \sum_{j\leqslant k } a_j x_j + \sum_{j>k} a_j y_j\right\Vert_{\mathcal{E}_1}.\\ 
$$            

We also have
\begin{align*}
 \left\Vert Ta\right\Vert_{\mathcal{E}_1} &\geqslant \left[ \left\Vert \sum_{j\leqslant k } a_j x_j\right\Vert_{\mathcal{E}_1}^2 + \left\Vert \sum_{j>k} a_j y_j\right\Vert_{\mathcal{E}_1}^2\right]^{\frac{1}{2}} \quad \text{ by orthogonality}\\
               &\geqslant \left[ \frac{1}{2}\left\Vert \sum_{j\leqslant k } a_j x_j\right\Vert_{2}^2 + \left\Vert \sum_{j>k} a_j y_j\right\Vert_{\mathcal{E}_1}^2\right]^{\frac{1}{2}} \quad \text{ by (\ref{eq-position2})}\\
               &\geqslant \left[  \frac{1}{2}\varepsilon^2\sum_{j\leqslant k } a_j^2  +  \sum_{j>k} a_j^2 \Vert y_j\Vert_{\mathcal{E}_1}^2\right]^{\frac{1}{2}} \quad \text{ by } (\ref{eq-DR})\\
               &\geqslant \left[  \frac{\varepsilon^2}{2n}\left(\sum_{j\leqslant k } \vert a_j\vert\right)^2  +  \frac{1}{2d_X^2(n-k)}\left(\sum_{j>k} \vert a_j\vert\right)^2 \right]^{\frac{1}{2}} \text{ by Cauchy-Shwarz}\\
               &\geqslant \left[  \frac{\varepsilon^2}{2n}\left(\sum_{j\leqslant k } \vert a_j\vert\right)^2  +  \frac{1}{4\varepsilon nd_X^2}\left(\sum_{j>k} \vert a_j\vert\right)^2 \right]^{\frac{1}{2}}\\
               &\geqslant \frac{1}{2} \left[ \frac{\varepsilon}{\sqrt{n}} \sum_{j\leqslant k } \vert a_j\vert + \frac{1}{d_X\sqrt{2\varepsilon n}} \sum_{j>k} \vert a_j\vert\right]\\
               &\geqslant \frac{1}{2^{\frac{4}{3}}\sqrt{n} d_X^{\frac{2}{3}}} \sum_{j=1}^{n} \vert a_j\vert \quad \text{ taking } \varepsilon = (\sqrt{2}d_X)^{-\frac{2}{3}}.
\end{align*}
As a conclusion,
$$
\frac{1}{2^{\frac{4}{3}}\sqrt{n} d_X^{\frac{2}{3}}} \Vert a\Vert_1 \leqslant \Vert
Ta \Vert_X \leqslant \Vert a\Vert_1$$ and therefore $d(X,l_1^n)
\leqslant 2^{\frac{4}{3}}\sqrt{n} d_X^{\frac{2}{3}}$ for all $X\in \mathbb{BM}_n$.
\end{proof}

Using the same procedure and working only with one ellipsoid $\mathcal{F}$, the ellipsoid 
of minimal volume containing $B_X$, and noting that by John's theorem \cite{MR0030135} 
$\frac{1}{\sqrt{n}} \mathcal{F} \subset B_X\subset \mathcal{F}$, we get the following
\begin{theo}
$R_1^n =R_\infty^n \leqslant (2n)^{\frac{5}{6}}$.
\end{theo}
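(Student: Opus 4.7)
My plan is to adapt the four-step argument of Theorem~\ref{distance-au-cube}, with the crucial simplification that one works with a single ellipsoid, namely the John ellipsoid $\mathcal{F}$ of minimal volume containing $B_X$. The inclusion $\frac{1}{\sqrt{n}}\mathcal{F} \subset B_X \subset \mathcal{F}$ from John's theorem plays the role of (\ref{eq-position3}), so the auxiliary ellipsoid $\mathcal{E}_1$ becomes unnecessary, and with it the losses coming from (\ref{eq-position2}). After an affine change of coordinates we may assume $\mathcal{F} = B_2^n$, so that the two Euclidean structures of Theorem~\ref{distance-au-cube} collapse into one.

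First, I would apply Theorem~\ref{dvoretzky-rogers} in this position to obtain, for any $\varepsilon\in(0,1)$, contact points $x_1,\dots,x_k$ with $k\geqslant \lceil(1-\varepsilon)^2 n\rceil \geqslant (1-2\varepsilon)n$ satisfying $\varepsilon(\sum a_j^2)^{1/2} \leqslant \|\sum a_j x_j\|_2 \leqslant \|\sum a_j x_j\|_X \leqslant \sum|a_j|$. Then I would complete $(x_j)_{j\leqslant k}$ to a basis of $\mathbb{R}^n$ by choosing a Euclidean-orthogonal family $y_{k+1},\dots,y_n$ in $\operatorname{span}\{x_j\}^{\perp}$ with $\|y_j\|_2 = 1/\sqrt{n}$; John's lower inclusion then guarantees $\|y_j\|_X\leqslant 1$. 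Defining $T:l_1^n\to X$ by $Te_j = x_j$ for $j\leqslant k$ and $Te_j = y_j$ for $j > k$, the upper estimate $\|Ta\|_X \leqslant \|a\|_1$ is immediate from the triangle inequality.

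For the lower estimate I would bound $\|Ta\|_X \geqslant \|Ta\|_2$ and exploit the Euclidean orthogonality between the two blocks to write
\begin{equation*}
\|Ta\|_2^2 \;=\; \Bigl\|\sum_{j\leqslant k} a_j x_j\Bigr\|_2^2 + \Bigl\|\sum_{j>k} a_j y_j\Bigr\|_2^2 \;\geqslant\; \varepsilon^2 \sum_{j\leqslant k} a_j^2 + \frac{1}{n}\sum_{j>k} a_j^2,
\end{equation*}
where the first term uses Theorem~\ref{dvoretzky-rogers} and the second uses the orthonormality of $(\sqrt{n}\,y_j)$. Applying Cauchy--Schwarz within each block, with $k\leqslant n$ and $n-k\leqslant 2\varepsilon n$, converts this into
\begin{equation*}
\|Ta\|_X^2 \;\geqslant\; \frac{\varepsilon^2}{n}\Bigl(\sum_{j\leqslant k}|a_j|\Bigr)^2 + \frac{1}{2\varepsilon n^2}\Bigl(\sum_{j>k}|a_j|\Bigr)^2.
\end{equation*}

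The decisive step, and the point where the exponent $5/6$ emerges, is the optimization over $\varepsilon$: setting the two coefficients equal forces $\varepsilon = (2n)^{-1/3}$, after which the elementary inequality $\alpha^2+\beta^2 \geqslant \tfrac{1}{2}(\alpha+\beta)^2$ recombines the two partial sums into $\|a\|_1$ and yields $\|Ta\|_X \geqslant (2n)^{-5/6}\|a\|_1$. This gives $d(X,l_1^n) \leqslant (2n)^{5/6}$ for every $X\in\mathbb{BM}_n$, and by the duality invariance of the Banach--Mazur distance $R_\infty^n = R_1^n \leqslant (2n)^{5/6}$. The main technical obstacle is not any single step but rather bookkeeping the numerical constants through the chain Dvoretzky--Rogers, Pythagoras, Cauchy--Schwarz, $\alpha^2+\beta^2$, so that the various factors of $2$ they contribute assemble to exactly $2^{5/6}$ --- which is precisely what working with only $\mathcal{F}$, rather than also with $\mathcal{E}_1$, buys over the naive substitution $d_X\leqslant\sqrt{n}$ in Theorem~\ref{distance-au-cube} (the latter would only give $2^{4/3}n^{5/6}$).
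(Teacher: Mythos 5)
Your proposal is correct and follows exactly the route the paper indicates for this theorem: the paper itself only states ``using the same procedure and working only with one ellipsoid $\mathcal{F}$'', and your write-up supplies precisely the missing details. The optimization $\varepsilon=(2n)^{-1/3}$, the split via Pythagoras and Cauchy--Schwarz, the final $\alpha^2+\beta^2\geqslant\frac12(\alpha+\beta)^2$, and the observation that dropping $\mathcal{E}_1$ removes the extra factor of $\sqrt{2}$ (turning $2^{4/3}n^{5/6}$ into $2^{5/6}n^{5/6}=(2n)^{5/6}$) all match the intended argument.
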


\begin{Rq}
\rm Here we are interested in high dimensional results; this is why
the constant is not that important. If we want an estimate
for ``small'' dimensions, then the value of the constant becomes
important. In \cite{MR1353450}, Giannopoulos proved that
$R_{\infty}^n \leqslant cn^{\frac{5}{6}}$ with $c=
\frac{2^{\frac{7}{6}}}{(\sqrt{2}-1)^{\frac{1}{3}}}\sim 3,0116$, and
thus his result becomes nontrivial when the dimension is larger than
$747$. On the other hand, our result becomes nontrivial whenever the
dimension is bigger than $32$. Moreover, we can obtain a better result for small dimensions 
by choosing
$\varepsilon$ in the last inequality in a different way: in fact we
have chosen $\varepsilon =(2n)^{-\frac{1}{3}}$ (replacing $d_X$ with $\sqrt{n}$) 
in the asymptotic regime,
otherwise one just need to optimize on $\varepsilon$ so that it
satisfies $\frac{\varepsilon}{\sqrt{(1-\varepsilon)^2n}} =
\frac{1}{n\sqrt{1-(1-\varepsilon)^2}}$; then our result becomes
nontrivial when the dimension is larger than $16$. In
\cite{MR2794363}, Taschuk has also obtained an estimate for the
Banach-Mazur distance to the cube of ``small''-dimensional spaces. 
Precisely, he proved the following
$$
R_\infty^n\leqslant \sqrt{n^2 -2n+2+\frac{2}{\sqrt{n+2}-1}}
$$
One can check that our result improves on that whenever the
dimension is larger than $22$.

\end{Rq}

\section{Projection on coordinate subspaces}

Given an $n\times m$ matrix $U$ and an integer $k\leqslant m$, 
our aim is to find a coordinate projection of $U$ of rank $k$ 
which gives the best minimal operator norm among all coordinate 
projections. First results were obtained by Lunin \cite{MR1001700}, 
and a complete answer to this question was given by Kashin-Tzafriri \cite{kashin-tzafriri} 
who proved the following:

\begin{theoE}[Kashin-Tzafriri]\label{kashin-tzafriri}
Let $U$ be an $n\times m$ matrix.
Fix $\lambda$ with $1/m \leqslant  \lambda  \leqslant \frac{1}{4}$. Then,
there exists a subset $\nu $ of $\{1, \ldots, m\}$ of cardinality
$|\nu| \geqslant  \lambda m $ such that
$$\| U_{\nu} \|  \leqslant  c \left( \sqrt{\lambda}\Vert U\Vert_2+\frac{\|U\|_{{\rm HS}}}{\sqrt{m}} \right),$$
where $U_\nu = UP_\nu$ and $P_\nu$ denotes the coordinate projection onto $\mathbb{R}^\nu$.
\end{theoE}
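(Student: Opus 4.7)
The plan is to adapt the deterministic barrier method of Batson--Spielman--Srivastava, used in the proof of Theorem~\ref{theoreme-principal}, to build the subset $\nu$ iteratively while controlling the top eigenvalue of the growing partial sum $A_l:=\sum_{i\leqslant l}v_{j_i}v_{j_i}^{t}$ where $v_j:=Ue_j$. Since $\|U_{\nu}\|^{2}=\lambda_{\max}\bigl(\sum_{j\in\nu}v_jv_j^{t}\bigr)$, it suffices to maintain $\lambda_{\max}(A_l)<u_l$ along an increasing upper barrier $u_l:=u_0+l\delta$ and then estimate the final value $u_k$ at $k:=\lceil\lambda m\rceil$. Starting from $A_0=0$ with parameters $u_0,\delta>0$ to be tuned, I would introduce the upper-barrier potential
$$\Phi^u(A):=\mathrm{Tr}\bigl((uI-A)^{-1}\bigr),\qquad \lambda_{\max}(A)<u,$$
and maintain the invariant $\Phi^{u_l}(A_l)\leqslant\Phi^{u_0}(A_0)=n/u_0$, which in particular forces $u_l-\lambda_{\max}(A_l)\geqslant u_0/n$ throughout.

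Setting $M_l:=(u_{l+1}I-A_l)^{-1}$, the Sherman--Morrison identity reduces the step to finding $j\notin\{j_1,\dots,j_l\}$ such that
$$\frac{v_j^{t}M_l^{2}v_j}{\Phi^{u_l}(A_l)-\Phi^{u_{l+1}}(A_l)}+v_j^{t}M_lv_j\leqslant 1.$$
Summing the left-hand side over all $j\in\{1,\dots,m\}$, as an upper bound for the average over the unused indices, yields
$$\frac{\mathrm{Tr}(M_l^{2}UU^{t})}{\Phi^{u_l}(A_l)-\Phi^{u_{l+1}}(A_l)}+\mathrm{Tr}(M_l UU^{t}).$$
I would then bound the first summand using $UU^{t}\preceq\|U\|^{2}I$ and $\Phi^{u_l}(A_l)-\Phi^{u_{l+1}}(A_l)\geqslant\delta\,\mathrm{Tr}(M_l^{2})$, which gives at most $\|U\|^{2}/\delta$, and the second summand using the two complementary inequalities $\mathrm{Tr}(M_lUU^t)\leqslant\|U\|^{2}\mathrm{Tr}(M_l)$ and $\mathrm{Tr}(M_lUU^t)\leqslant\|M_l\|\,\|U\|_{{\rm HS}}^{2}$, both controlled via the potential invariant. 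As long as the resulting upper bound is strictly less than the number of remaining indices $m-l\geqslant(1-\lambda)m$, such a $j$ must exist.

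The final stage is to tune $u_0$ and $\delta$ so that $\|U_{\nu}\|^{2}\leqslant u_k=u_0+k\delta\lesssim\|U\|_{{\rm HS}}^{2}/m+\lambda\|U\|^{2}$; taking square roots and using $\sqrt{a+b}\leqslant\sqrt{a}+\sqrt{b}$ then yields the target $c\bigl(\sqrt{\lambda}\|U\|+\|U\|_{{\rm HS}}/\sqrt{m}\bigr)$. The hypothesis $\lambda\leqslant 1/4$ keeps $(1-\lambda)m$ a definite fraction of $m$, which is what allows the averaging argument to succeed at every step. The main obstacle is the joint control of $\mathrm{Tr}(M_l^{2}UU^{t})$ and $\mathrm{Tr}(M_lUU^{t})$: the scalar invariant on $\Phi^{u_l}(A_l)$ gives only indirect control, so striking the right balance between $u_0$ (which sets the Hilbert--Schmidt contribution) and $\delta$ (which sets the operator-norm contribution) so that both bounds together stay below $m-l$ is the delicate point, and is what forces the theorem to mix the two natural scales $\|U\|$ and $\|U\|_{{\rm HS}}/\sqrt{m}$.
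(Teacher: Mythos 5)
Your overall strategy is right and matches the paper's: iterate the Batson--Spielman--Srivastava upper-barrier argument, adding one column at a time while keeping $\lambda_{\max}(A_l)$ below a shifting barrier $u_l=u_0+l\delta$, and tune $u_0,\delta$ at the end. The paper's Theorem~\ref{kashin-tzafriri-new} does exactly this and even extends the range of $\lambda$ past $1/4$.

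However, there is a real gap in your choice of potential. You take the unweighted BSS potential $\Phi^u(A)=\mathrm{Tr}\left((uI-A)^{-1}\right)$ and maintain $\Phi^{u_l}(A_l)\leqslant n/u_0$. That invariant only yields $\mathrm{Tr}(M_l)\leqslant n/u_0$ and $\|M_l\|\leqslant n/u_0$, so both of your ``complementary'' bounds
$\mathrm{Tr}(M_lUU^{t})\leqslant\|U\|^{2}\mathrm{Tr}(M_l)\leqslant n\|U\|^{2}/u_0$
and
$\mathrm{Tr}(M_lUU^{t})\leqslant\|M_l\|\,\|U\|^{2}_{\mathrm{HS}}\leqslant n\|U\|^{2}_{\mathrm{HS}}/u_0$
carry an unwanted factor of $n$. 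With the target scales $u_0\sim\|U\|^{2}_{\mathrm{HS}}/m$ and $\delta\sim\|U\|^{2}/m$ these are of order $nm\|U\|^{2}/\|U\|^{2}_{\mathrm{HS}}$ and $nm$, both of which overwhelm the budget $(1-\lambda)m$ unless $U$ has essentially flat spectrum. You correctly flag this at the end (``the scalar invariant on $\Phi^{u_l}(A_l)$ gives only indirect control''), but the fix is not a cleverer balancing of $u_0$ and $\delta$ --- it is to change the potential. The paper works with the \emph{weighted} potential $\psi(A,u):=\mathrm{Tr}\left(U^{t}(uI-A)^{-1}U\right)=\mathrm{Tr}\left((uI-A)^{-1}UU^{t}\right)$, so the invariant $\psi(A_l,u_l)\leqslant\psi(A_0,u_0)=\|U\|^{2}_{\mathrm{HS}}/u_0$ bounds $\mathrm{Tr}(M_lUU^{t})$ directly, with no extra $n$, and the averaging closes with $\frac{\|U\|^{2}}{\delta}+\frac{\|U\|^{2}_{\mathrm{HS}}}{u_0}\leqslant(1-\lambda)m$, yielding exactly $u_k\lesssim\frac{\|U\|^{2}_{\mathrm{HS}}}{m}+\lambda\|U\|^{2}$.

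Two smaller points: the paper also adds scaled rank-one terms $s\,v_jv_j^{t}$ and rescales at the end, which decouples $u_0$ and $\delta$ and gives cleaner constants (and, with the extra parameter $\eta$, lets $\lambda$ range over all of $(0,1)$); this is a convenience rather than a necessity. Also, the bound you want from Sherman--Morrison has to be $F_l(v)\leqslant 1/s$ rather than $\leqslant 1$ when scaling is used, as in the paper's Lemma~\ref{lem-kashin}. With the weighted $\psi$ substituted for $\Phi$, the rest of your outline goes through as you describe.
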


The conclusion of the Theorem states that for a fixed $\lambda<\frac{1}{4}$ we have 
\begin{equation}
\min_{\underset{\vert \sigma\vert =\lambda m}{\sigma\subset \{1,...,m\}}}
 \Vert U_{\sigma}\Vert \leqslant c \left( \sqrt{\lambda}\Vert U\Vert+
 \frac{\|U\|_{{\rm HS}}}{\sqrt{m}} \right),
\end{equation}
and this estimate is optimal 
in the sense that the dependence on the parameters in 
the right hand side cannot be improved.

Kashin-Tzafriri's proof (see \cite{MR1826503}) uses the selectors with 
some other probabilistic arguments and the Grothendieck's factorization Theorem. 
In \cite{MR2807539}, Tropp gave a randomized algorithm to realize 
Grothendieck's factorization theorem and therefore he was able 
to give a randomized algorithm to find the subset $\sigma$ promised 
in Theorem\,E. 

Our aim here is to give a deterministic algorithm to find the subset $\sigma$. 
Our method uses tools from the work of Batson-Spielman-Srivastava \cite{batson-spielman-srivastava} 
and allows us to improve Kashin-Tzafriri's result by getting better constants in the result and extending the size 
of the coordinate projection; indeed, in Theorem~E one can deal with a proportion of the columns 
less than $1/4$ while we will be able to work with any proportion smaller than $1$.

\begin{theo}\label{kashin-tzafriri-new}
Let $U$ be an $n\times m$ matrix and let $1/m \leqslant\lambda \leqslant \eta<1$. Then, there
exists $\sigma \subset \{1,\ldots , m\}$ with $\vert \sigma\vert = k
\geqslant \lambda m$ such that
$$\Vert U_\sigma\Vert \leqslant \frac{1}{\sqrt{1-\lambda}}
\left( \sqrt{\lambda +\eta} \Vert U\Vert + \sqrt{ 1+ \frac{\lambda}{\eta}}\frac{\Vert U\Vert_{{\rm
HS}}}{\sqrt{m}}\right),$$ 
In particular,
$$\Vert U_\sigma\Vert \leqslant \frac{\sqrt{2}}{\sqrt{1-\lambda}}
\left( \sqrt{\lambda} \Vert U\Vert +\frac{\Vert U\Vert_{{\rm
HS}}}{\sqrt{m}}\right),$$
where $U_\sigma$ denotes the selection of
the columns of $U$ with indices in $\sigma$.
\end{theo}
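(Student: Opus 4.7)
My plan is to adapt the Batson-Spielman-Srivastava upper-barrier iteration, using the $UU^t$-weighted potential
\[
\phi^u(A, b) = \mathrm{Tr}\bigl((bI - A)^{-1} UU^t\bigr),
\]
defined for $b > \lambda_{\max}(A)$. Starting from $A_0 = 0$ with some initial barrier $b_0 > 0$, build iteratively $A_{l+1} = A_l + u_{j_{l+1}} u_{j_{l+1}}^t$, where $u_j = Ue_j$ and $j_{l+1} \in \{1, \ldots, m\} \setminus S_l$ ($S_l = \{j_1, \ldots, j_l\}$), raising the barrier by a fixed $\delta > 0$ each step, so $b_l = b_0 + l\delta$. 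Maintain two invariants: $\lambda_{\max}(A_l) < b_l$ and $\phi^u(A_l, b_l) \leq \phi^u(A_0, b_0) = \Vert U\Vert_{{\rm HS}}^2/b_0$. After $k = \lfloor \lambda m \rfloor$ successful steps, $\Vert U_\sigma\Vert^2 = \Vert A_k\Vert < b_k = b_0 + k\delta$.

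By Sherman-Morrison (as in the lower-barrier version in Section~2), the invariants propagate from step $l$ to step $l+1$ provided the chosen column $u = u_{j_{l+1}}$ satisfies the linearized sufficient condition
\[
u^t N U U^t N u + \Delta_l \, u^t N u \leq \Delta_l,\qquad N = (b_{l+1} I - A_l)^{-1},
\]
where $\Delta_l = \phi^u(A_l, b_l) - \phi^u(A_l, b_{l+1}) > 0$. Summing over $j = 1, \ldots, m$, existence of an admissible $j \notin S_l$ reduces (after dropping the nonnegative contribution from $j \in S_l$) to the aggregate trace inequality
\[
\mathrm{Tr}\bigl((NUU^t)^2\bigr) + \Delta_l \, \phi^u(A_l, b_{l+1}) \leq (m - l)\Delta_l.
\]

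The critical observation for closing the aggregate is that with the weighted potential, the invariant directly yields $\phi^u(A_l, b_{l+1}) \leq \Vert U\Vert_{{\rm HS}}^2/b_0$, so the second term is controlled by the Hilbert-Schmidt norm and the initial barrier. For the first term, $\mathrm{Tr}((NUU^t)^2) \leq \Vert NUU^t\Vert \cdot \phi^u(A_l, b_{l+1}) \leq (\Vert U\Vert^2/\delta)\, \phi^u(A_l, b_{l+1})$, via $\Vert N\Vert \leq 1/\delta$. A matching lower bound on $\Delta_l$ — for instance via $\Delta_l \geq \delta\, \mathrm{Tr}((b_{l+1} I - A_l)^{-2} UU^t)$ combined with a Cauchy-Schwarz estimate $\mathrm{Tr}(N^2 UU^t) \geq \phi^u(A_l, b_{l+1})^2/\Vert U\Vert_{{\rm HS}}^2$ — combined with the specific choices
\[
b_0 = \frac{(1+\lambda/\eta)\, \Vert U\Vert_{{\rm HS}}^2}{(1-\lambda) m},\qquad \delta = \frac{(1+\eta/\lambda)\, \Vert U\Vert^2}{(1-\lambda) m},
\]
should make the aggregate hold uniformly for $l = 0, \ldots, k-1$.

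With these parameters, $b_k = b_0 + k\delta \leq \frac{(\lambda+\eta)\Vert U\Vert^2 + (1+\lambda/\eta)\Vert U\Vert_{{\rm HS}}^2/m}{1-\lambda}$, which by completing the square (the cross term being nonnegative) is bounded above by $\frac{1}{1-\lambda}\bigl(\sqrt{\lambda+\eta}\, \Vert U\Vert + \sqrt{1+\lambda/\eta}\, \Vert U\Vert_{{\rm HS}}/\sqrt{m}\bigr)^2$, yielding the first stated inequality; the second bound is its $\eta = \lambda$ specialization. The main obstacle, and the delicate technical core, is the uniform verification of the aggregate inequality across all $k$ steps: the lower bound on $\Delta_l$ couples to $\phi^u(A_l, b_{l+1})$ and to $b_{l+1}$, both of which fluctuate along the iteration, so one must balance the invariants carefully. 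The role of the parameter $\eta$ is precisely to mediate the trade-off between the $\Vert U\Vert$ and $\Vert U\Vert_{{\rm HS}}/\sqrt{m}$ contributions, which is why allowing $\lambda \leq \eta < 1$ produces a continuous family of estimates.
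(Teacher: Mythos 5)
Your overall plan is essentially the paper's: iterate an upper-barrier argument with the $UU^t$-weighted potential, raise the barrier by a fixed $\delta$, and choose $b_0,\delta$ so that after $k=\lfloor\lambda m\rfloor$ steps the barrier value gives the stated bound. The only cosmetic difference is that the paper adds reweighted rank-one terms $s\,(Ue_j)(Ue_j)^t$ and divides out $s$ at the end, whereas you absorb that rescaling into the calibration of $b_0$ and $\delta$; your choices make $\alpha+\|U\|^2/\delta=(1-\lambda)m$, which is exactly the reciprocal of the paper's $s=\frac{(1-\lambda)m}{\alpha+\|U\|^2/\delta}$, so the two are equivalent, and your final evaluation of $b_k$ reproduces the theorem.

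The genuine gap is exactly the one you flag, and it is created by the way you bound the quadratic term. You sum the exact Sherman--Morrison condition to get $\mathrm{Tr}\bigl((NUU^t)^2\bigr)+\Delta_l\,\phi^u\leqslant(m-l)\Delta_l$ and then estimate $\mathrm{Tr}\bigl((NUU^t)^2\bigr)\leqslant(\|U\|^2/\delta)\,\phi^u$. This leaves a $\phi^u$ rather than a $\Delta_l$ in the first term, so closing it forces you to lower-bound $\Delta_l$, and the Cauchy--Schwarz bound $\Delta_l\geqslant\delta\,\phi^u{}^2/\|U\|_{{\rm HS}}^2$ in turn requires a \emph{lower} bound on $\phi^u$, which decays along the iteration and has no useful floor; that chain does not close. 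The fix is to replace $UU^t\preceq\|U\|^2 I$ \emph{inside} the quadratic form \emph{before} summing, i.e.\ use the sufficient condition $\|U\|^2\,u^tN^2u+\Delta_l\,u^tNu\leqslant\Delta_l$ (this is exactly what the paper's Lemma~\ref{lem-kashin} states). Summing then gives $\|U\|^2\,\mathrm{Tr}(N^2UU^t)+\Delta_l\,\phi^u\leqslant(m-l)\Delta_l$, and now the elementary comparison $(u_lI-A)^{-1}-(u_{l+1}I-A)^{-1}\succeq\delta N^2$ yields $\mathrm{Tr}(N^2UU^t)\leqslant\Delta_l/\delta$, so the first term becomes $(\|U\|^2/\delta)\Delta_l$ and the $\Delta_l$ cancels cleanly. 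The aggregate then reduces to $\|U\|^2/\delta+\alpha\leqslant m-l$, which your parameter choice gives with room to spare for every $l<k$, and no lower bound on $\Delta_l$ is ever needed.
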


\begin{proof}
We denote by $(e_j)_{j\leqslant m}$ the canonical basis of
$\mathbb{R}^m$. Since $$U_\sigma\cdot U_\sigma^t = \displaystyle
\sum_{j\leqslant \sigma} \left( Ue_j\right)\cdot \left( Ue_j\right)^t,$$ our problem reduces to
the question of estimating the largest eigenvalue of this sum of
rank one matrices. We will follow the same procedure as in the proof
of the restricted invertibility theorem: at each step, we would like
to add a column of the original matrix and then study the evolution
of the largest eigenvalue. However, it will be convenient for us to
add suitable multiples of the columns of $U$ in order to construct
the $l$-th matrix; for each $l$ we will choose a subset $\sigma_k$
of cardinality $\vert \sigma_l\vert =l$ and consider the matrix 
$$
A_l=  \sum_{j\in\sigma_l} s_j \left(Ue_j\right)\cdot \left( Ue_j\right)^t,
$$ where
$(s_j)_{j\in\sigma}$ will be positive numbers which will be suitably
chosen. At the step $l$, the barrier will be denoted by $u_l$,
namely the eigenvalues of $A_l$ will be all smaller than $u_l$. The
corresponding potential is $\psi(A_l,u_l):= {\rm Tr}\left(U^t(u_lI
-A_l)^{-1}U\right)$. We set $A_0=0$, while $u_0$ will be determined
later.

As we did before, at each step the value of the potential
$\psi(A_l,u_l)$ will decrease so that we can continue the iteration,
while the value of the barrier will increase by a constant $\delta$,
i.e. $u_{l+1} = u_l +\delta $. We will use a lemma which appears as
Lemma\,3.4 in \cite{srivastava}. We state it here in the notation
introduced above.

\begin{lem}\label{lem-kashin}
Let $A$ be an $n\times n$ symmetric positive semidefinite matrix. 
Assume that $\lambda_{\max}(A) \leqslant u_l$. Let $v$ be a vector
in $\mathbb{R}^n$ satisfying
$$F_l(v) :=\frac{v^t(u_{ l+1}I-A)^{-2} v}{ \psi(A,u_{l})- \psi(A,u_{l+1})}
\Vert U\Vert^2 + v^t(u_{l+1}I-A)^{-1}v \leqslant \frac{1}{s}.$$
Then, if we define $A'= A + svv^t$ we have
$$\lambda_{\max}(A')\leqslant u_{l+1} \quad and\quad  \psi(A',u_{l+1})\leqslant \psi(A,u_l).$$
\end{lem}

\begin{proof}
Using Sherman-Morrison formula we have:
\begin{align*}
\psi(A',u_{l+1})&={\rm Tr}\left(U^t\left(u_{l+1}I-A-svv^t\right)U\right)\\
&={\rm Tr}\left(U^t \left(u_{l+1}I-A\right)U\right)+ 
\frac{sv^t(u_{l+1}I-A)^{-1}UU^t(u_{l+1}I-A)^{-1}v}{1-sv^t(u_{l+1}I-A)^{-1}v}\\
&\leqslant\psi(A,u_l)-\left(\psi(A,u_l)- \psi(A,u_{l+1})\right)+ \frac{v^t(u_{l+1}I-A)^{-2}v}{\frac{1}{s}-v^t(u_{l+1}I-A)^{-1}v}\Vert U\Vert^2\\
\end{align*}
Since $v^t(u_{l+1}I-A)^{-1}v<  F_l(v)$ and $F_l(v) \leqslant \frac{1}{s}$ 
we deduce that the quantity above is finite. This implies that $\lambda_{\rm max}(A')< u_{l+1}$, since 
otherwise one would find $s'<s$ such that $\lambda_{\rm max}(A+s'vv^t)=u_{l+1}$ and therefore 
$\psi(A+s'vv^t,u_{l+1})$ would blow up which contradicts the fact that it is finite.\\
On the other hand, rearranging the inequality above using the fact that  $F_l(v) \leqslant \frac{1}{s}$ 
we get $\psi(A',u_{l+1})\leqslant \psi(A,u_l).$
\end{proof}

We write $\alpha$ for the initial potential, i.e. $\alpha =
\frac{\Vert U\Vert_{{\rm HS}}^2}{u_0}$ . Suppose that
$A_l=\sum_{j\in\sigma_l} s_j\left(Ue_j\right)\cdot \left(Ue_j\right)^t$ is
constructed so that $\psi(A_l,u_l) \leqslant \psi(A_{l-1},u_{l-1})
\leqslant \alpha$ and $\lambda_{\max}(A_l) \leqslant u_l$. We will
now use Lemma\,\ref{lem-kashin} in order to construct $A_{l+1}$. To
this end, we must find a vector $Ue_j$ not chosen before and a
scalar $s_{l+1}$ so that $F_l(Ue_j) \leqslant
\frac{1}{s_{l+1}}$, and then use Lemma~\ref{lem-kashin}. 
Since $(u_lI - A_l)^{-1}$ and $(u_{l+1}I -A_l)^{-1}$ are
positive semidefinite, one can easily check that
$$(u_lI - A_l)^{-1}-(u_{l+1}I- A_l)^{-1}\succeq \delta (u_{l+1}I -A_l)^{-2}.$$
Therefore,
$${\rm Tr}\left(U^t(u_{l+1}I-A_l)^{-2}U\right)\leqslant \frac{1}{\delta}
\left( \psi(A_l,u_{l}) -\psi(A_l,u_{l+1})\right).$$ It follows that
\begin{align*}
\displaystyle \sum_{j\not\in\sigma_l} F_l(Ue_j) &\leqslant \sum_{j\leqslant m} F_l(Ue_j)
= \frac{Tr\left( U^t (u_{l+1}I-A_l)^{-2}U\right)}{\psi(A_l,u_l)- \psi(A_l,u_{l+1})} \Vert U\Vert^2 + \psi(A_l,u_{l+1})\\
&\leqslant \frac{\Vert U\Vert^2}{\delta}+ \alpha,
\end{align*}
and therefore one can find $i\not\in\sigma_l$ such that
\begin{equation}\label{last-eq-kashin}
F_l(Ue_i)\leqslant \frac{1}{\vert \sigma_l^c\vert} \left(
\frac{\Vert U\Vert^2}{\delta}+ \alpha\right)\leqslant
\frac{1}{\vert \sigma_k^c\vert} \left(  \frac{\Vert
U\Vert^2}{\delta}+ \alpha\right),
\end{equation}
 where $k$ is the maximum
number of steps (which is in our case $\lambda m$). \\We are going to
choose all $s_j$ equal to $s :=\frac{(1-\lambda) m }{\alpha +
\frac{\Vert U\Vert^2}{\delta}}$. With this choice of $s$ and by (\ref{last-eq-kashin}), the condition of Lemma~\ref{lem-kashin} 
is satisfied and therefore we are able to construct $A_{l+1}$. After $k=
\lambda m$ steps, we get $\sigma = \sigma_k$ such that
\begin{align*}
\displaystyle \lambda_{\mathrm{max}}\left(\sum_{j\in\sigma_k} \left(Ue_j\right)\cdot\left(
Ue_j\right)^t \right)&\leqslant \frac{1}{s} u_k=\frac{1}{s} (u_0 + k \delta )
= \frac{ \alpha + \frac{\Vert U\Vert^2}{\delta}}{(1-\lambda) m} \left(u_0 + k\delta\right)\\
&= \frac{1}{1-\lambda}\left[\frac{\Vert U\Vert_{{\rm HS}}^2}{m} +
\lambda \Vert U\Vert^2
+\lambda\Vert U\Vert_{{\rm HS}}^2 \frac{\delta}{u_0} + \frac{\Vert U\Vert^2}{m} \frac{u_0}{\delta}\right]
\end{align*}
The result follows by taking $u_0= \eta m \delta$. The second part of the theorem follows by taking $\lambda = \eta$.
\end{proof}

\bigskip

\noindent \textbf{Aknowledgement}. I am grateful to my PhD
advisor Olivier Guédon for many helpful discussions. I would also like to thank the doctoral
school of Paris-Est for giving me the opportunity to visit the University
of Athens and Apostolos Giannopoulos for his hospitality and his precious help. I would like 
to thank the anonymous referee for his valuable remarks.
\nocite{*}
\bibliographystyle{abbrv}
\bibliography{bibliography-invertibility}

\end{document}